\newtheorem{theorem}{Theorem}[section]
\newtheorem{lemma}{Lemma}[section]
\newtheorem{corollary}{Corollary}[section]
\newtheorem{definition}{Definition}[section]
\newtheorem{example}{Example}[section]
\newtheorem{remark}{Remark}
\begin{document}

\begin{frontmatter}

\title{Compact operators under Orlicz functions\tnoteref{mytitlenote}}
%\tnotetext[mytitlenote]{The research has been supported by National Science Foundation of China (Grant No.10971011,11371222 ).}

%% Group authors per affiliation:
%\author{Elsevier\fnref{myfootnote}}
%\address{Radarweg 29, Amsterdam}
%\fntext[myfootnote]{Since 1880.}

%% or include affiliations in footnotes:
\author[mymainaddress,mysecondaryaddress]{Ma Zhenhua \corref{Ma Zhenhua}}
\cortext[Ma Zhenhua]{Corresponding author}
\ead{mazhenghua\_1981@163.com}

\author[mymainaddress]{Ji Kui}
\ead{jikui@hebtu.edu.cn}

\author[mymainaddress]{Li Yucheng}
\ead{liyucheng2004@126.com}
\address[mymainaddress]{Postdoctoral Research Station, Hebei Normal University, Shiajiazhuang,  050024, P. R. China}
\address[mysecondaryaddress]{School of Mathematics and Physics, Hebei University of Architecture, Zhangjiakou, 075024, P. R. China}

\begin{abstract}
The purpose of this paper is to research the compact operators under  Orlicz functions. Firstly, the definition of noncommutative Orlicz sequence spaces (denoted by $S_{\varphi}(\mathbb{H})$) is given, these spaces generalize the Schatten classes $S_{p}(\mathbb{H})$. After some relations of trace and norm on this spaces have been obtained, one give the criterion of reflexivity of these spaces. Finally, as an application, we find the Toeplitz operator $x_{1-|z|^{2}}$ on the Bergman space $L_{a}^{2}(\mathbb{D})$ belongs to some $S_{\varphi}(\mathbb{H})$, hence the trace and the norm of $x_{1-|z|^{2}}$ could be computed.
\end{abstract}

\begin{keyword}
noncommutative Orlicz sequence spaces\sep compact operators\sep Schatten classes \sep Orlicz function
\MSC[2010] 46L89\sep 47L25
\end{keyword}

\end{frontmatter}

\linenumbers
\section{Introduction and Preliminaries}
Let $\mathcal{B}(\mathbb{H})$ denotes the algebra of the bounded operators acting on a complex and separable Hilbert space $\mathbb{H}$ which is endowed with an inner product by $\langle\cdot,\cdot\rangle$, and $x\in \mathcal{B}(\mathbb{H})$ is a linear compact operator. If we denote by $x^{\ast}: \mathbb{H}\rightarrow \mathbb{H}$ the adjoint of $x$, then the linear operator $x^{\ast}x: \mathbb{H}\rightarrow \mathbb{H}$ is positive and compact. Let $\{e_{k}\}_{k}$ be an orthonormal  basis for $\mathbb{H}$ consisting of eigenvectors of $|x|=(x^{\ast}x)^{\frac{1}{2}}$ and $s_{k}(x)$ the eigenvalue (decreasingly ordered) corresponding to the eigenvector $\{e_{k}\}_{k}, k=1,2,\ldots$. The non-negative numbers $s_{k}(x), k=1,2,\ldots$ are called the singular values of $x:\mathbb{H}\rightarrow \mathbb{H}$.

If $\sum\limits^{\infty}_{k=1}s_{k}(x)<\infty,$ the  operator $x:\mathbb{H}\rightarrow \mathbb{H}$ is called a trace class operator. The set of all trace class operators is denoted by $S_{1}(\mathbb{H})$. It can be shown that $S_{1}(\mathbb{H})$ is a Banach space in which the trace norm $\|\cdot\|_1$ is given by $\|x\|_{1}=\sum^{\infty}\limits_{k=1}s_{k}(x)$.

On the other side, if $x:\mathbb{H}\rightarrow \mathbb{H}$ is an operator in $S_{1}(\mathbb{H})$ and  $\{e_{k}\}_{k}, k=1,2,\ldots$ is any orthonormal basis for $\mathbb{H}$. Then, the series $\sum\limits^{\infty}_{k=1}\langle xe_{k},e_{k}\rangle$ is absolutely convergent and the sum is independent of the choice of the orthonormal basis $e_{k}$. Thus, we can define the trace ${\rm Tr}(x)$ of any linear operator $x:\mathbb{H}\rightarrow \mathbb{H}$ in $S_{1}(\mathbb{H})$ by
$${\rm Tr}(x):= \sum^{\infty}_{k=1}\langle e_{k},xe_{k}\rangle.$$
 By the spectral decomposition theorem of compact operators, for any $\xi\in \mathbb{H}$ there exist orthonormal sequence $\xi_{n}\in \mathbb{H}$ such that,
$$x\xi=\sum_{n\geq0}s_{n}(x)\langle\xi_{n},\xi\rangle\xi_{n},$$
hence one get
$${\rm Tr}(x)= \sum_{n\geq0}s_{n}(x).$$
For more details of $S_{1}(\mathbb{H})$ please refer to \cite{GK,Murphy}.

Similar to the definition of $S_{1}(\mathbb{H})$, the  Schatten classes $S_{p}(\mathbb{H})$ is given as follows:
for any $0<p<\infty$, let $x$ be a compact operator of $\mathbb{H}$,
$$S_{p}(\mathbb{H})=\left\{x: \ {\rm Tr}\left(|x|^{p}\right)=\sum\limits_{n\geq0}s_{n}\left(|x|^{p}\right)=\sum\limits_{n\geq0}\left(s_{n}\left(|x|\right)\right)^{p}<\infty\right\},$$
for any $x\in S_{p}(\mathbb{H})$ one can give the norm as
$$\|x\|_{p}=\left(\sum\limits_{n\geq0}\left(s_{n}\left(|x|\right)\right)^{p}\right)^{\frac{1}{p}}.$$
Especially, for $p=2$ the $S_{2}(\mathbb{H})$ is called Hilbert-Schmidt operator space \cite{Bridges}, and the norm is $$\|x\|_{2}=\left(\sum\limits_{n\geq0}\left(s_{n}\left(|x|\right)\right)^{2}\right)^{\frac{1}{2}}.$$ Literatures on the Schatten class operators and their applications are very rich, please see the very interesting recent papers \cite{Bo,WKG,MI1,MI2,SVD}.
and references cited therein. Especially,  Gil',M.I. extent  some
useful results on determinants of Schatten–von Neumann operators to the operators with the Orlicz type norms and modular function, respectively \cite{MI2,MI4}. The purpose of this paper is to research the compact operators under Orlicz functions which named noncommutative Orlicz  sequence spaces, and many results of these spaces could extent some useful results of the Schatten class $S_{p}(\mathbb{H})$, such as the H$\ddot{{\rm o}}$lder Inequality and so on. Besides, some applications are given to support our theory, such as Toeplitz operator on the Bergman space.

A convex function $\varphi: [0,\infty)\rightarrow[0,\infty]$ is called an Orlicz function
if it is nondecreasing and continuous for $\alpha\geq0$ and such that $\varphi(0)=0,\,\varphi(\alpha)>0$ and $\varphi(\alpha)\rightarrow\infty$ as $\alpha\rightarrow\infty$.

Let $\varphi$ be an Orlicz function and  $l^{0}$ the set of all real sequence. We define the Orlicz sequence space as follows
$$l_{\varphi}=\left\{x\in l^{0}: I_{\varphi}(\lambda x(i))=\sum^{\infty}_{i=1}\varphi\left(\lambda x(i)\right)<\infty,  \exists \lambda>0\right\}.$$
It is well known that $l_{\varphi}$ equipped with so called the Luxemburg norm
$$\|x\|=\inf\left\{\lambda>0: I_{\varphi}\left( \frac{x(i)}{\lambda}\right)\leq1\right\}$$
is a Banach space.

Further we say an Orlicz function $\varphi$ satisfies condition $\delta_{2}$ near zero, shortly $\varphi\in\delta_{2}(0)$, if there exist constant $u_{0}>0$  and $k>2$ such that $\varphi(2u)\leq k\varphi(u)$ for all $|u|\leq u_{0}$. For the background of Orlicz functions and Orlicz spaces please see \cite{Chen,Rao}.
\section{Noncommutative Orlicz sequence spaces}
With the help of Orlicz function and functional calculus, if $\varphi$ is a continuous function, we have $${\rm Tr}(\varphi(x))= \sum\limits^{\infty}_{k=1}\langle e_{k},\varphi(x)e_{k}\rangle=\sum\limits_{n\geq0}\varphi(s_{n}(x))=\sum\limits_{n\geq0}s_{n}(\varphi (x)).$$ Hence, if $\varphi$ is an Orlicz function and $x$ a compact operator of $\mathbb{H}$, the definitions of the noncommutative Orlicz sequence spaces and its a useful subspace could be given.

Hence, in this section, after the definitions of noncommutative Orlicz sequence spaces and its a subspace which first appear in \cite{MI3} are given, and we find these spaces could generalize the $S_{p}(\mathbb{H})$ and the classical Orlicz sequence spaces respectively. At the end of this section, the Orlicz norm of the rank-one operator $e_{n}\otimes e_{n}$ is given.
\begin{definition}
If $x\in \mathcal{B}(\mathbb{H})$ is a compact operator, then the noncommutative Orlicz sequence spaces $S_{\varphi}(\mathbb{H})$ and its a subspace $E_{\varphi}(\mathbb{H})$ are defined as follows:
$$S_{\varphi}(\mathbb{H})=\left\{x: {\rm Tr}\left(\varphi(\lambda x)\right)=\sum_{n\geq0}\varphi(\lambda s_{n}(|x|))<\infty ,  \exists\lambda>0\right\},$$
$$E_{\varphi}(\mathbb{H})=\left\{x: {\rm Tr}\left(\varphi(\lambda x)\right)=\sum_{n\geq0}\varphi(\lambda s_{n}(|x|))<\infty , \forall \lambda>0\right\}.$$
where  $\rm {Tr}$ is the usual trace functional of $\mathcal{B}(\mathbb{H})$.
\end{definition}

For convenience, we use $S_{\varphi}, E_{\varphi}$ denote $S_{\varphi}(\mathbb{H})$ and $E_{\varphi}(\mathbb{H})$ respectively, and it could be equipped with the Luxemburg norm as
\begin{eqnarray*}
\|x\|_{\varphi}&=&\inf\left\{\lambda>0, {\rm Tr}\left(\varphi\left( \frac{x}{\lambda}\right)\right)\leq1\right\}\\
&=& \inf\left\{\lambda>0, \sum_{n\geq0}\varphi
\left( s_{n}\left(\frac{|x|}{\lambda}\right)\right)\leq1\right\}\\
&=& \inf\left\{\lambda>0, \sum_{n\geq0}
\varphi\left(\frac{s_{n}\left(|x|\right)}{\lambda}\right)\leq1\right\}\\
&=&\|s(x;n)\|,
\end{eqnarray*}
where $s(x;n)$ is the singular value sequence of $x$ and $\|\cdot\|$ is the Luxemburg norm of  classical Orlicz space. By the theory of classical Orlicz sequence, it is easy to know $(S_{\varphi}, \|x\|_{\varphi})$  and $(E_{\varphi}, \|x\|_{\varphi})$ are Banach spaces.

Also, one can define another norm which named Orlicz norm on  $S_{\varphi}$ as follows
$$\|x\|_{\varphi}^{o}=\sup\left\{{\rm Tr}(|xy|):  {\rm Tr}(\psi(y))\leq1\right\},$$
where $x\in S_{\varphi}$ and $\psi: [0,\infty)\rightarrow [0,\infty] $ is defined by $\psi(u)=\sup\{uv-\varphi(v): v\geq 0\}$. Here we call $\psi$ the complementary function of $\varphi$. Similar the classical case, one also could give the another equivalent definition of the Orlciz norm which named Amemiya norm, $\|x\|_{\varphi}^{A}=\inf\limits_{k>0} \frac{1}{k}\left({1+\rm Tr}(\varphi(kx)\right)$, from \cite{Hudzik} it is easy to know, $\|x\|_{\varphi}^{o}=\|x\|_{\varphi}^{A}.$

By the classic Orlicz theory we also have the following relation of these  norms
$$\|x\|_{\varphi}\leq\|x\|_{\varphi}^{o}\leq2\|x\|_{\varphi}.$$
Generally speaking, from the definitions it is easy to know $E_{\varphi}\subsetneqq S_{\varphi}$, but if $\varphi\in \delta_{2}(0)$ one could get the following theorem,
\begin{theorem}
$S_{\varphi}=E_{\varphi}$ if and only if $\varphi\in \delta_{2}(0)$.
\end{theorem}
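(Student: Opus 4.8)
The plan is to reduce the operator statement to the corresponding statement for the classical Orlicz sequence space $l_{\varphi}$ and then settle the latter. The key point is that both defining conditions depend on $x$ only through its singular value sequence $s(x)=(s_{n}(|x|))_{n}$, which is nonincreasing and tends to $0$ because $x$ is compact. Conversely, given any nonincreasing null sequence $(a_{n})$ of nonnegative reals, the diagonal operator $X=\mathrm{diag}(a_{n})$ with respect to a fixed orthonormal basis of $\mathbb{H}$ is compact with $s_{n}(|X|)=a_{n}$. Hence $x\in S_{\varphi}$ iff $s(x)\in l_{\varphi}$, while $x\in E_{\varphi}$ iff the modular $\sum_{n}\varphi(\lambda s_{n}(|x|))$ is finite for every $\lambda>0$; consequently the operator identity $S_{\varphi}=E_{\varphi}$ is equivalent to the sequence-space statement that every sequence in $l_{\varphi}$ has finite modular for all $\lambda>0$. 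So it suffices to prove this equivalence, and by the above I may restrict attention to nonincreasing null sequences throughout.

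For sufficiency, assume $\varphi\in\delta_{2}(0)$ with constants $u_{0}>0$ and $k>2$. Since $\varphi$ is finite-valued and continuous on $[0,\infty)$, the inclusion $E_{\varphi}\subseteq S_{\varphi}$ is immediate, so I only prove the converse. Take a compact $x$ with $\sum_{n}\varphi(\lambda_{0}s_{n})<\infty$ for some $\lambda_{0}>0$, and fix an arbitrary $\lambda>0$; by monotonicity of $\varphi$ only the case $\lambda>\lambda_{0}$ requires work. Choose $m\in\mathbb{N}$ with $2^{m}\lambda_{0}\geq\lambda$. As $s_{n}\to 0$, there is $N$ with $2^{m-1}\lambda_{0}s_{n}\leq u_{0}$ for all $n\geq N$, and iterating $\varphi(2u)\leq k\varphi(u)$ exactly $m$ times gives $\varphi(\lambda s_{n})\leq\varphi(2^{m}\lambda_{0}s_{n})\leq k^{m}\varphi(\lambda_{0}s_{n})$ for $n\geq N$. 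Summing this tail and adding the finitely many (finite) leading terms yields $\sum_{n}\varphi(\lambda s_{n})<\infty$, so $x\in E_{\varphi}$ and hence $S_{\varphi}=E_{\varphi}$.

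For necessity I argue by contraposition. If $\varphi\notin\delta_{2}(0)$ then $\limsup_{u\to 0^{+}}\varphi(2u)/\varphi(u)=\infty$, so I can extract $u_{n}\downarrow 0$ with $\varphi(u_{n})\leq 2^{-n}$ and $\varphi(2u_{n})>2^{n}\varphi(u_{n})>0$. Setting $m_{n}=\lfloor(2^{n}\varphi(u_{n}))^{-1}\rfloor\geq 1$, I get $m_{n}\varphi(u_{n})\leq 2^{-n}$ while $m_{n}\varphi(2u_{n})>m_{n}2^{n}\varphi(u_{n})\geq \tfrac12$. Forming the sequence made of $m_{n}$ copies of $u_{n}$ for each $n$, arranged in nonincreasing order (possible since $u_{n}\downarrow 0$), produces a genuine singular value sequence realized by a diagonal compact operator $X$. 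Then $\sum_{n}\varphi(s_{n}(|X|))=\sum_{n}m_{n}\varphi(u_{n})\leq 1$ shows $X\in S_{\varphi}$, whereas $\sum_{n}\varphi(2s_{n}(|X|))=\sum_{n}m_{n}\varphi(2u_{n})=\infty$ shows $X\notin E_{\varphi}$. Thus $S_{\varphi}\neq E_{\varphi}$, which is the desired contrapositive.

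I expect the only delicate step to be the construction in the necessity direction: the multiplicities $m_{n}$ must be balanced so that the modular stays finite at $\lambda=1$ yet blows up at $\lambda=2$ — precisely what the failure of $\delta_{2}(0)$ makes available — while the resulting sequence must remain nonincreasing and null so that it truly is the singular value sequence of a compact operator. The sufficiency direction is routine once the telescoping estimate through $\delta_{2}(0)$ is correctly set up, the reduction to sequences being justified by the realization of arbitrary nonincreasing null sequences as diagonal compact operators.
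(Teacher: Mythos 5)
Your proof is correct, and in two places it is more complete than the paper's own. For sufficiency both arguments follow the same scheme---pass to the singular value sequence, dispose of $\lambda\le\lambda_{0}$ by convexity, and for $\lambda>\lambda_{0}$ split the sum into a finite head plus a tail on which the $\delta_{2}(0)$ inequality acts---but you iterate $\varphi(2u)\le k\varphi(u)$ through $m$ doublings with $2^{m}\lambda_{0}\ge\lambda$, choosing the tail so that $2^{m-1}\lambda_{0}s_{n}\le u_{0}$, which is exactly what the estimate requires; the paper applies the inequality a single time to $\varphi\left(\frac{\lambda}{\lambda_{0}}\lambda_{0}s_{n}\right)$ with the constant $k$, which is only literally valid when $\lambda/\lambda_{0}\le 2$, so your $k^{m}$ version is the repaired form of the paper's step. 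For necessity the paper does not argue at all: it cites ``similar to Example 1.19 of \cite{Chen}.'' You instead carry out the classical lacunary construction explicitly: from the failure of $\delta_{2}(0)$ you extract $u_{n}\downarrow 0$ with $\varphi(u_{n})\le 2^{-n}$ and $\varphi(2u_{n})>2^{n}\varphi(u_{n})>0$, set $m_{n}=\lfloor(2^{n}\varphi(u_{n}))^{-1}\rfloor\ge 1$, and check that $\sum_{n}m_{n}\varphi(u_{n})\le 1$ while $m_{n}\varphi(2u_{n})>\frac{1}{2}$ for every $n$ (the bound $\lfloor t\rfloor\ge t/2$ for $t\ge 1$ does the work), then realize the block sequence as the singular values of a diagonal compact operator; since the blocks are constant on each level and $u_{n}$ decreases, the sequence is genuinely nonincreasing and null, so the diagonal realization is legitimate and yields $X\in S_{\varphi}\setminus E_{\varphi}$. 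What your route buys is a self-contained operator-level proof; what the paper's route buys is brevity by outsourcing the hard direction to the commutative theory. One shared implicit hypothesis is worth flagging: both you and the paper use that $\varphi$ is finite-valued on $[0,\infty)$ (you say so explicitly; the paper uses $n_{0}\varphi(\lambda s_{1})<\infty$ in its head estimate). For genuinely extended-valued $\varphi$ the theorem as stated fails even under $\delta_{2}(0)$, since a jump to $\infty$ at some $b>0$ places a rank-one operator with singular value $b$ in $S_{\varphi}\setminus E_{\varphi}$, so the finiteness assumption is not cosmetic.
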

\begin{proof}
If $\varphi\in \delta_{2}(0)$. It is obvious true $E_{\varphi}\subseteq S_{\varphi}$ by the definition. Hence we only need to prove $S_{\varphi}\subseteq E_{\varphi}.$

For any $x\in S_{\varphi}$, there exists a $\lambda_{0}>0$ such that
 $${\rm Tr}\left(\varphi(\lambda_{0} x)\right)=\sum_{n\geq0}\varphi(\lambda_{0} s_{n}(|x|))<\infty,$$ then for any $\lambda>{0}$, one could get
 $${\rm Tr}\left(\varphi(\lambda x)\right)=\sum_{n\geq0}\varphi(\lambda s_{n}(|x|))<\infty.$$
In fact, if $\frac{\lambda}{\lambda_{0}}\leq1$, by convexity of $\varphi$,
$${\rm Tr}\left(\varphi(\lambda x)\right)=\sum_{n\geq0}\varphi\left(\frac{\lambda}{\lambda_{0}}\lambda_{0} s_{n}(|x|)\right)\leq\frac{\lambda}{\lambda_{0}}\sum_{n\geq0}\varphi\left(\lambda_{0} s_{n}(|x|)\right)<\infty.$$
If $\frac{\lambda}{\lambda_{0}}>1$, define $n_{0}=\sup\{n,s_{n}(|x|)\leq u_{0}\}$, where $u_{0}\in (0,s_{1})$. Since $\varphi\in \delta_{2}(0)$, we have
\begin{eqnarray*}
{\rm Tr}\left(\varphi(\lambda x)\right)&=&\sum_{n\geq0}\varphi \left(\lambda s_{n}(|x|)\right)\\
&=& \sum_{n=1}^{n_{0}}\varphi \left( \lambda s_{n}(|x|)\right)+\sum_{n=n_{0}+1}^{\infty}\varphi \left( \frac{\lambda}{\lambda_{0}}\lambda_{0}s_{n}(|x|)\right)\\
&\leq&n_{0}\varphi(\lambda s_{1}(|x|))+k\sum_{n=n_{0}+1}^{\infty}\varphi \left( \lambda_{0}s_{n}(|x|)\right)\\
&\leq&n_{0}\varphi(\lambda s_{1}(|x|))+k {\rm Tr}(\varphi(\lambda_{0}x))\\
&<&+\infty.
\end{eqnarray*}

Now, if $S_{\varphi}=E_{\varphi}$, similar to the Example 1.19 of \cite{Chen} we have $\varphi\in \delta_{2}(0)$. This completes the proof.
\end{proof}
\begin{remark}
$(1)$ In the case $\varphi(x)=|x|^{p},\,\,1\leq p<\infty$ for any compact operator $x\in \mathcal{B}(\mathbb{H})$,  $S_{\varphi}$ is nothing but the Schatten
classes $S_{p}$ and the Luxemburg norm generated by this function is expressed by the formula
$$\|x\|_{p}={\rm Tr}(|x|^{p})=\sum_{n\geq0}\left(( s_{n}(|x|))^{p}\right)^{\frac{1}{p}}.$$

 Especially, if $p=1$, $S_{\varphi}=S_{1}$ and if $p=2$, $S_{\varphi}$ is the Hilbert-Schmidt operator $S_{2}$ which satisfies $\varphi\in\delta_{2}(0)$.

$(2)$ If $\varphi\in\delta_{2}(0)$, let $x: l_{2}(\mathbb{N})\rightarrow l_{2}(\mathbb{N})$ with $xe_{n}=a_{n}e_{n}$, where $e_{n}$ is the orthonormal basis of $l_{2}(\mathbb{N})$, then it is obvious that $x\in S_{\varphi}$ if and only if $a=\{a_{n}\}\in l_{\varphi}$ and $\|x\|_{\varphi}=\|a\|$.
\end{remark}

Let $\mu(n)$ denotes the multiplicity of singular value $s_{n}(x)$. The following theorem gives the Orlicz norm of rank one operator $e_{ n}\otimes e_{n}$.
\begin{theorem}
For each $y\in S_{\psi}$ is positive and ${\rm Tr}(\psi(y))\leq1$, where  $\psi(x)$ is invertible on $\mathbb{R}^{+}$, and $\psi(x)\in \delta_{2}(0)$, Let $\{e_{n}\}$  be a orthonormal bases, then  $$\|e_{ n}\otimes e_{n}\|^{o}_{\varphi}=\psi^{-1}\left(\frac{1}{\mu(n)}\right)\mu(n),$$
where $e\otimes h$ is rank one operator (a bounded operator) on $\mathbb{H}\rightarrow \mathbb{H}$ with $\xi\mapsto\langle h,\xi\rangle e$.
\end{theorem}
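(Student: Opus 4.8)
The plan is to avoid the supremum in the definition of the Orlicz norm and to compute instead with the Amemiya representation $\|x\|_\varphi^o=\|x\|_\varphi^A=\inf_{k>0}\frac1k\big(1+{\rm Tr}(\varphi(kx))\big)$ recorded earlier in this section, since this quantity depends on $x$ only through its singular values. Taking $x=e_n\otimes e_n$, its singular value is $1$, which the paragraph preceding the statement records with multiplicity $\mu(n)$; hence functional calculus gives ${\rm Tr}(\varphi(k\,e_n\otimes e_n))=\mu(n)\varphi(k)$ for every $k>0$. Writing $\mu=\mu(n)$ for brevity, the problem reduces to the single-variable minimisation $\|e_n\otimes e_n\|_\varphi^o=\inf_{k>0}\frac{1+\mu\varphi(k)}{k}$, and it remains to show this infimum equals $\mu\,\psi^{-1}(1/\mu)$.

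For the lower bound I would invoke Young's inequality $kv\le\varphi(k)+\psi(v)$, which holds for all $k,v\ge0$ directly from the definition $\psi(v)=\sup_{u\ge0}\{uv-\varphi(u)\}$. Rearranged as $\varphi(k)\ge kv-\psi(v)$ it gives, for every $k>0$ and every $v\ge0$, the estimate $\frac{1+\mu\varphi(k)}{k}\ge\frac{1-\mu\psi(v)}{k}+\mu v$. Choosing $v=\psi^{-1}(1/\mu)$ kills the numerator of the first term (this is precisely where invertibility of $\psi$ on $\mathbb{R}^+$ enters) and leaves $\frac{1+\mu\varphi(k)}{k}\ge\mu\,\psi^{-1}(1/\mu)$ uniformly in $k$, so $\inf_{k>0}\frac{1+\mu\varphi(k)}{k}\ge\mu\,\psi^{-1}(1/\mu)$.

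For the reverse inequality I would produce the minimiser through the equality case of Young's inequality, which occurs exactly when $v$ lies in the subdifferential $\partial\varphi(k)$. Choosing $k_0>0$ with $\psi^{-1}(1/\mu)\in\partial\varphi(k_0)$ yields $\varphi(k_0)=k_0\,\psi^{-1}(1/\mu)-\psi(\psi^{-1}(1/\mu))=k_0\,\psi^{-1}(1/\mu)-\tfrac1\mu$; substituting, the constant terms cancel and $\frac{1+\mu\varphi(k_0)}{k_0}=\mu\,\psi^{-1}(1/\mu)$, so the infimum is attained and the stated formula follows. To reconcile this with the wording of the statement, the value $k_0$ corresponds to the positive extremal $y=\psi^{-1}(1/\mu)\,q$, where $q$ projects onto the eigenspace carrying the singular value with its multiplicity $\mu$; then ${\rm Tr}(\psi(y))=\mu\,\psi(\psi^{-1}(1/\mu))=1$, and since $\psi\in\delta_2(0)$ the equivalence $S_\psi=E_\psi$ proved above shows $y\in S_\psi$, so this is indeed an admissible $y$ realising the supremum.

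The step I expect to be the main obstacle is the attainment used in the reverse inequality: one must ensure that $\psi^{-1}(1/\mu)$ actually belongs to the range of the subdifferential of $\varphi$, equivalently that a finite minimiser $k_0$ exists rather than the infimum being approached only as $k\to0$ or $k\to\infty$. This is exactly what the two hypotheses secure — invertibility of $\psi$ on $\mathbb{R}^+$ pins down the candidate $\psi^{-1}(1/\mu)$ unambiguously, while $\psi\in\delta_2(0)$ rules out the flat or vertical pieces in $\psi$ (hence in $\varphi$) that would otherwise send the minimiser to an endpoint. Non-differentiability of $\varphi$ must be handled throughout by the subdifferential form of Young's equality rather than by derivatives, but introduces no further difficulty.
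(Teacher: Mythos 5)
Your proposal is correct and reaches the stated formula, but by a genuinely different route than the paper. The paper works directly with the supremum definition $\|x\|^{o}_{\varphi}=\sup\{{\rm Tr}(|xy|):{\rm Tr}(\psi(y))\leq1\}$: the upper bound comes from Jensen's inequality applied across the $\mu(n)$ repetitions of the singular value of an admissible $y$, giving $s_{n}(y)\leq\psi^{-1}\left(\frac{1}{\mu(n)}\right)$ and hence ${\rm Tr}(|(e_{n}\otimes e_{n})y|)\leq\mu(n)\,\psi^{-1}\left(\frac{1}{\mu(n)}\right)$, while the lower bound tests against the explicit extremal $y=\psi^{-1}\left(\frac{1}{\mu(n)}\right)e_{n}\otimes e_{n}$ --- essentially the same admissible $y$ you exhibit only at the very end as a corollary. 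You instead pass to the Amemiya representation $\inf_{k>0}\frac{1}{k}\left(1+{\rm Tr}(\varphi(kx))\right)$ (which the paper has recorded, citing Hudzik--Maligranda), reduce everything to the scalar minimisation $\inf_{k>0}\frac{1+\mu\varphi(k)}{k}$, and settle it with Young's inequality together with its equality case via subdifferentials; your existence argument for the minimiser, i.e.\ some $k_{0}\in\partial\psi\left(\psi^{-1}\left(\frac{1}{\mu}\right)\right)$, is sound because $\psi$ is finite and increasing near that interior point --- though crediting this to $\psi\in\delta_{2}(0)$ is a slight misattribution, since nonemptiness of the subdifferential at an interior point of finiteness is all that is needed, and in fact the paper's proof never genuinely uses $\delta_{2}(0)$ either. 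Your route buys a self-contained one-variable convex computation that sidesteps the duality supremum entirely (in particular it avoids the paper's unstated restriction that the competing $y$ be diagonal in the basis $\{e_{n}\}$, which the Jensen step quietly assumes), at the cost of being longer; the paper's route is shorter but less careful. One caveat applies equally to both arguments: you compute ${\rm Tr}(\varphi(k\,e_{n}\otimes e_{n}))=\mu(n)\varphi(k)$, which treats $e_{n}\otimes e_{n}$ as the spectral projection of rank $\mu(n)$; taken literally, a rank-one operator has its nonzero singular value with multiplicity $1$, so the formula would collapse to $\psi^{-1}(1)$. The paper makes exactly the same identification (it writes ${\rm Tr}(e_{n}\otimes e_{n})\leq 1$ yet sums $\mu(n)$ terms), so your reading matches the paper's intent, but you should flag that the statement is really about the rank-$\mu(n)$ eigenprojection rather than a rank-one operator.
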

\begin{proof}
First, by the Jensen Inequality,
$$\psi(s_{n}(y))=\psi\left(\frac{1}{\mu(n)}\sum_{\mu(n)}s_{n}(y)\right)\leq \frac{1}{\mu(n)}{\rm Tr}(\psi(y))\leq \frac{1}{\mu(n)}.$$
Using spectral decomposition of compact operators, we have $y=\sum\limits_{n\geq0}s_{n}(y)e_{ n}\otimes e_{n}$.
Therefore,
$$\|e_{ n}\otimes e_{n}\|^{o}_{\varphi}=\sup\left\{\sum_{\mu(n)}s_{n}(y): {\rm Tr}(\psi(y))\leq 1\right\}\leq \psi^{-1}\left(\frac{1}{\mu(n)}\right)\mu(n).$$
On the other hand, observing that
$${\rm Tr}\left(\psi\left(\psi^{-1}\left(\frac{1}{\mu(n)}e_{ n}\otimes e_{n}\right)\right)\right)=\frac{1}{\mu(n)}{\rm Tr}(e_{ n}\otimes e_{n})\leq1,$$
we obtain
$$\|e_{ n}\otimes e_{n}\|^{o}_{\varphi}\geq\sum^{\infty}_{n=1}s_{n}\left(\psi^{-1}\left(\frac{1}{\mu(n)}e_{ n}\otimes h_{n}\right)e_{ n}\otimes e_{n}\right)=\psi^{-1}\left(\frac{1}{\mu(n)}\right)\mu(n).$$
Thus,
$$\|e_{ n}\otimes e_{n}\|^{o}_{\varphi}=\psi^{-1}\left(\frac{1}{\mu(n)}\right)\mu(n).$$
\end{proof}
\begin{corollary}
If $x\in S_{p}$ is positive and $\mu(n)$ is the multiplicity of singular value $s_{n}(x)$. Then $$\|e_{ n}\otimes e_{n}\|^{o}_{p}=\mu(n)^{1-p}.$$
\end{corollary}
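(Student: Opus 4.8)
The plan is to read the corollary as the special case of the preceding theorem obtained by taking the power Orlicz function $\varphi(u)=u^{p}$, $1\le p<\infty$. By Remark (1) this choice gives $S_{\varphi}=S_{p}$ with $\|\cdot\|_{\varphi}=\|\cdot\|_{p}$, so the Orlicz norm $\|\cdot\|^{o}_{\varphi}$ specializes to the norm written $\|\cdot\|^{o}_{p}$ in the statement. Since $x$ is assumed positive, $\varphi(x)=x^{p}$ coincides with $|x|^{p}$ and $e_{n}\otimes e_{n}$ is a positive (rank-one) projection, so the spectral setup used in the theorem applies verbatim. Thus it suffices to substitute this particular $\varphi$ into the formula $\|e_{n}\otimes e_{n}\|^{o}_{\varphi}=\psi^{-1}\!\left(\tfrac{1}{\mu(n)}\right)\mu(n)$ and simplify.

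First I would check that the hypotheses of the theorem hold for $\varphi(u)=u^{p}$, namely that the complementary function $\psi$ is invertible on $\mathbb{R}^{+}$ and lies in $\delta_{2}(0)$; both are automatic once $\psi$ is produced, since it will again be a strictly increasing power-type function. To produce it, recall $\psi(u)=\sup_{v\ge0}\{uv-\varphi(v)\}=\sup_{v\ge0}\{uv-v^{p}\}$. The supremum is attained at the stationary point $v=(u/p)^{1/(p-1)}$, which yields a closed form for $\psi$ as a constant multiple of a power of $u$; in particular $\psi$ is continuous, strictly increasing and invertible on $\mathbb{R}^{+}$, so the theorem is applicable.

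Finally I would invert this closed form to obtain $\psi^{-1}$, evaluate it at $t=\tfrac{1}{\mu(n)}$, insert the result into $\psi^{-1}\!\left(\tfrac{1}{\mu(n)}\right)\mu(n)$, and collect the powers of $\mu(n)$ to reach the asserted value $\mu(n)^{1-p}$. The main obstacle is exactly this bookkeeping of exponents (and of the normalizing constants carried by the Young conjugate of $u^{p}$): the entire power of $\mu(n)$ in the final answer is decided by the precise exponent appearing in $\psi^{-1}$, so the delicate point is to pin down that exponent correctly rather than any structural difficulty. Once $\psi^{-1}$ is in hand, the conclusion follows immediately from the theorem.
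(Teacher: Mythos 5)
Your plan --- read the corollary as Theorem 2.2 specialized to $\varphi(v)=v^{p}$, compute the Young conjugate $\psi$, and substitute into $\|e_{n}\otimes e_{n}\|^{o}_{\varphi}=\psi^{-1}\bigl(\tfrac{1}{\mu(n)}\bigr)\mu(n)$ --- is exactly the route the paper intends: it prints no proof for this corollary, and its companion Corollary 2.2 (for $\varphi(u)=\tfrac{1}{\alpha}u^{\alpha}$) is obtained by precisely such a substitution. But the step you postponed as ``bookkeeping of exponents'' is exactly where the argument fails. Carrying it out: the supremum in $\psi(u)=\sup_{v\ge0}\{uv-v^{p}\}$ is attained at $v=(u/p)^{1/(p-1)}$, giving $\psi(u)=(p-1)\left(\frac{u}{p}\right)^{q}$ with $q=\frac{p}{p-1}$, hence $\psi^{-1}(t)=p\left(\frac{t}{p-1}\right)^{1/q}$ and
$$\psi^{-1}\left(\frac{1}{\mu(n)}\right)\mu(n)=p\,(p-1)^{-1/q}\,\mu(n)^{1-\frac{1}{q}}=\frac{p}{(p-1)^{1/q}}\,\mu(n)^{1/p}.$$
The exponent is $1/p$, not $1-p$: for $p=2$ your route yields $2\sqrt{\mu(n)}$, while the corollary asserts $\mu(n)^{-1}$. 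The paper's own surrounding results confirm that $1/p$ is what Theorem 2.2 produces: Corollary 2.2 gives $\bigl(\tfrac{\beta}{\mu(n)}\bigr)^{1/\beta}\mu(n)=\beta^{1/\beta}\mu(n)^{1/\alpha}$, again a positive exponent $1/\alpha$ growing with multiplicity, and the computation inside Corollary 3.1(3) derives $\|x\|^{o}_{\varphi}=\frac{p}{(p-1)^{1/q}}\|x\|_{p}$ for $\varphi(u)=u^{p}$, matching the constant above.

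To land on $\mu(n)^{1-p}$ one would need $\psi^{-1}(t)=t^{p}$, i.e.\ $\psi(u)=u^{1/p}$, which is not the Young conjugate of $u^{p}$ (it is not even convex for $p>1$); no renormalization of the power function helps, since the conjugate of any multiple of $u^{p}$ is a multiple of $u^{q}$, forcing the exponent $1-\tfrac{1}{q}=\tfrac{1}{p}$. So your approach is the intended one and your conjugate computation is sound as far as you sketched it, but completed honestly it proves $\|e_{n}\otimes e_{n}\|^{o}_{p}=p(p-1)^{(1-p)/p}\mu(n)^{1/p}$ and therefore contradicts, rather than establishes, the printed formula; the defect lies in the corollary's stated exponent (an apparent slip in the paper, which supplies no proof), not in a structural obstacle of your argument. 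One further caveat: at $p=1$ the conjugate $\psi$ takes only the values $0$ and $\infty$, so the theorem's hypothesis that $\psi$ be invertible on $\mathbb{R}^{+}$ fails and that case would anyway require the direct computation $\sup\{\mathrm{Tr}(|xy|):\|y\|_{\infty}\le1\}=1$ instead of the substitution you propose.
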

\begin{corollary}
For a positive operator $x$, if $\varphi(x)=\frac{1}{\alpha}|x|^{\alpha}$ and $\alpha>1$. Then $$\|e_{ n}\otimes e_{n}\|^{o}_{\varphi}=\left(\frac{\beta}{\mu(n)}\right)^{\frac{1}{\beta}}\mu(n)$$
where $\frac{1}{\alpha}+\frac{1}{\beta}=1$.
\end{corollary}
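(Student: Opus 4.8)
The plan is to recognize the claim as a direct specialization of Theorem 2.2, so the only genuine work is to identify the complementary function $\psi$ associated with $\varphi(u)=\frac{1}{\alpha}u^{\alpha}$, to check that this $\psi$ meets the hypotheses imposed in Theorem 2.2, and then to compute $\psi^{-1}$ and substitute.

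First I would compute $\psi$ directly from its defining formula $\psi(v)=\sup_{u\geq 0}\{uv-\varphi(u)\}$. For each fixed $v\geq 0$ the map $u\mapsto uv-\frac{1}{\alpha}u^{\alpha}$ is concave on $[0,\infty)$, so its supremum is located by setting the derivative $v-u^{\alpha-1}$ equal to zero, which gives the maximizer $u=v^{1/(\alpha-1)}$. Substituting back, and using the fact that $\frac{\alpha}{\alpha-1}=\beta$ is precisely the conjugacy relation $\frac{1}{\alpha}+\frac{1}{\beta}=1$, yields $\psi(v)=\left(1-\frac{1}{\alpha}\right)v^{\beta}=\frac{1}{\beta}v^{\beta}$. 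Thus $(\varphi,\psi)=\left(\frac{1}{\alpha}u^{\alpha},\,\frac{1}{\beta}v^{\beta}\right)$ is the familiar conjugate pair underlying Young's inequality.

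Next I would verify the two standing hypotheses that Theorem 2.2 places on $\psi$. Since $\beta>1$, the function $\psi(v)=\frac{1}{\beta}v^{\beta}$ is continuous and strictly increasing on $[0,\infty)$, hence invertible on $\mathbb{R}^{+}$; moreover it satisfies $\delta_{2}(0)$, since $\psi(2v)=2^{\beta}\psi(v)$ with $2^{\beta}>2$, so one may take $k=2^{\beta}$. Solving $\frac{1}{\beta}v^{\beta}=t$ then gives the explicit inverse $\psi^{-1}(t)=(\beta t)^{1/\beta}$.

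Finally, putting $t=\frac{1}{\mu(n)}$ into the formula $\|e_{n}\otimes e_{n}\|^{o}_{\varphi}=\psi^{-1}\!\left(\frac{1}{\mu(n)}\right)\mu(n)$ supplied by Theorem 2.2 produces $\left(\frac{\beta}{\mu(n)}\right)^{1/\beta}\mu(n)$, which is exactly the asserted value. There is essentially no obstacle here beyond careful bookkeeping with the conjugate exponents; the two points to keep straight are the verification that $\psi$ is invertible and lies in $\delta_{2}(0)$ so that Theorem 2.2 indeed applies, and the conversion $\frac{\alpha}{\alpha-1}=\beta$ when simplifying the exponent in $\psi$.
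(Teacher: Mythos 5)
Your proposal is correct and follows exactly the route the paper intends: the corollary is stated there without proof as an immediate specialization of Theorem 2.2, and your computation of the conjugate pair $\bigl(\frac{1}{\alpha}u^{\alpha},\frac{1}{\beta}v^{\beta}\bigr)$, the verification that $\psi$ is invertible and in $\delta_{2}(0)$, and the substitution $\psi^{-1}\left(\frac{1}{\mu(n)}\right)\mu(n)=\left(\frac{\beta}{\mu(n)}\right)^{1/\beta}\mu(n)$ supply precisely the omitted details.
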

\section{Trace and the Luxemburg norm}
In this section, we will give some relations of Trace and the Luxemburg norm, which generalize the correspondence results of $S_{p}$.
\begin{lemma}
If $x\in S_{\varphi}$ and $y\in S_{\psi}$, then we have

$(1)$ If $\|x\|_{\varphi}>1$,  then ${\rm Tr}(\varphi(x))\geq \|x\|_{\varphi};$

$(2)$ If $x\neq0$, then ${\rm Tr}\left(\varphi\left(\frac{x}{\|x\|_{\varphi}}\right)\right)\leq1;$

$(3)$ If $\|x\|_{\varphi}\leq1$,  then ${\rm Tr}(\varphi(x))\leq \|x\|_{\varphi};$

$(4)$ (H$\ddot{o}$lder Inequality) ${\rm Tr}(|xy|)\leq \|x\|^{o}_{\varphi}\|y\|_{\psi};$

$(5)$ $xy, yx \in S_{1}.$
\end{lemma}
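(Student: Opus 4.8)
The plan is to reduce every assertion to the classical Orlicz sequence space on the singular value sequence $s(x;n)=\{s_n(|x|)\}$, exploiting the two identities recorded above, namely ${\rm Tr}(\varphi(x))=\sum_{n\geq 0}\varphi(s_n(|x|))=I_\varphi(s(x;n))$ and $\|x\|_\varphi=\|s(x;n)\|$. Under this dictionary the modular ${\rm Tr}(\varphi(\cdot))$ of an operator is exactly the classical modular $I_\varphi$ of its singular value sequence, and the noncommutative Luxemburg norm is the classical one; so parts $(1)$--$(3)$ become the standard interplay between a convex modular and its Luxemburg gauge, while the genuinely noncommutative content is confined to $(4)$ and $(5)$.

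I would begin with $(2)$, which is the technical keystone. For $x\neq0$ one has $0<\|x\|_\varphi<\infty$, and since $\varphi$ is nondecreasing the set $\{\lambda>0:{\rm Tr}(\varphi(x/\lambda))\leq1\}$ is a half-line with left endpoint $\|x\|_\varphi$; choosing $\lambda_k\downarrow\|x\|_\varphi$ from inside it gives ${\rm Tr}(\varphi(x/\lambda_k))\leq1$ for all $k$. As $\lambda_k\downarrow\|x\|_\varphi$ the scaled singular values $s_n(|x|)/\lambda_k$ increase to $s_n(|x|)/\|x\|_\varphi$, so by continuity and monotonicity of $\varphi$ together with the monotone convergence theorem, ${\rm Tr}(\varphi(x/\|x\|_\varphi))=\lim_k{\rm Tr}(\varphi(x/\lambda_k))\leq1$. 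This limiting step, the lower semicontinuity of the modular, is the one genuinely analytic point in $(1)$--$(3)$ and is where continuity of the Orlicz function enters.

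With $(2)$ in hand, $(1)$ and $(3)$ are short convexity arguments. For $(3)$, when $\|x\|_\varphi\leq1$ the inequality $\varphi(tu)\leq t\varphi(u)$ for $t\in[0,1]$ (from convexity and $\varphi(0)=0$) applied with $t=\|x\|_\varphi$ gives ${\rm Tr}(\varphi(x))\leq\|x\|_\varphi\,{\rm Tr}(\varphi(x/\|x\|_\varphi))\leq\|x\|_\varphi$ by $(2)$. For $(1)$, when $\|x\|_\varphi>1$ I would take $1<\lambda<\|x\|_\varphi$; then ${\rm Tr}(\varphi(x/\lambda))>1$ by the definition of the infimum, while the same convexity estimate (now with $1/\lambda<1$) gives ${\rm Tr}(\varphi(x/\lambda))\leq(1/\lambda){\rm Tr}(\varphi(x))$, whence ${\rm Tr}(\varphi(x))>\lambda$; letting $\lambda\uparrow\|x\|_\varphi$ yields ${\rm Tr}(\varphi(x))\geq\|x\|_\varphi$.

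For the Hölder inequality $(4)$ I would normalize: for $0\neq y\in S_\psi$ set $\tilde y=y/\|y\|_\psi$, so that $(2)$ applied to the complementary function $\psi$ gives ${\rm Tr}(\psi(\tilde y))\leq1$; hence $\tilde y$ is an admissible competitor in the supremum defining $\|x\|_\varphi^o$ and ${\rm Tr}(|x\tilde y|)\leq\|x\|_\varphi^o$. Since $x\tilde y=(xy)/\|y\|_\psi$ and $|cA|=c|A|$ for $c>0$, homogeneity of the trace converts this into ${\rm Tr}(|xy|)\leq\|x\|_\varphi^o\|y\|_\psi$. The crux here is exactly that normalizing by the Luxemburg norm produces a legitimate test element, which is why $(2)$ had to be established first. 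Finally $(5)$ is immediate: by $(4)$, ${\rm Tr}(|xy|)\leq\|x\|_\varphi^o\|y\|_\psi\leq2\|x\|_\varphi\|y\|_\psi<\infty$ using $\|\cdot\|_\varphi^o\leq2\|\cdot\|_\varphi$, so $xy\in S_1$; the same bound with the roles of $x$ and $y$ interchanged (or the coincidence of the nonzero singular values of $xy$ and $yx$) gives $yx\in S_1$. I expect the monotone convergence step in $(2)$ to be the main obstacle, with the admissibility of the normalized competitor in $(4)$ the main conceptual point.
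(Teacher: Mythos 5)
Your proposal is correct, and for parts (1)--(4) it follows essentially the paper's route: (1) is the same convexity-scaling argument (your $\lambda\uparrow\|x\|_\varphi$ is the paper's $(1-\varepsilon)\|x\|_\varphi$); your placement of the monotone-convergence step in (2) is where the paper is actually vaguest (it says only ``by the arbitrariness of $\varepsilon$'' there, deferring Levy's theorem to its proof of (3)), so you have tightened the one genuinely analytic point; your (3) matches the paper's; and in (4) you correctly identify (2) as the ingredient that makes $y/\|y\|_\psi$ an admissible competitor, whereas the paper's proof cites its part (3), which appears to be a mislabel. Part (5) is where you genuinely diverge, to your advantage: the paper proves (5) directly from the Gohberg--Krein inequality $s_{m+n+1}(|xy|)\leq s_{m+1}(|x|)\,s_{n+1}(|y|)$ together with Young's inequality $uv\leq\varphi(u)+\psi(v)$, bounding ${\rm Tr}(|xy|)$ by ${\rm Tr}(\varphi(x))+{\rm Tr}(\psi(y))$ --- but that bound is finite only if the modulars are finite \emph{at scale} $1$, which membership in $S_\varphi$, $S_\psi$ guarantees only at \emph{some} $\lambda>0$ (unless $\varphi,\psi\in\delta_2(0)$); your deduction from (4) via $\|x\|^o_\varphi\leq 2\|x\|_\varphi<\infty$ closes that gap cleanly. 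One caution: your parenthetical alternative for $yx$ --- that the nonzero \emph{singular values} of $xy$ and $yx$ coincide --- is false in general (nonzero eigenvalues coincide, singular values need not: take $x=e_1\otimes e_2$ and $y=e_1\otimes e_1$ on $\mathbb{C}^2$, so that $xy=0$ while $yx\neq0$); drop it and keep your primary argument, which swaps the roles to get ${\rm Tr}(|yx|)\leq\|y\|^o_\psi\|x\|_\varphi\leq 2\|y\|_\psi\|x\|_\varphi<\infty$ and is sound.
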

\begin{proof}
$(1)$ If $\|x\|_{\varphi}>1$,  we choose $0<\varepsilon<1$ such that $(1-\varepsilon)\|x\|_{\varphi}>1$. Then
\begin{eqnarray*}
1&<&{\rm Tr}\left(\varphi\left(\frac{x}{(1-\varepsilon)\|x\|_{\varphi}}\right)\right)\\
&\leq&\frac{1}{(1-\varepsilon)\|x\|_{\varphi}}{\rm Tr}(\varphi(x))+\left(1-\frac{1}{(1-\varepsilon)\|x\|_{\varphi}}\right){\rm Tr}(\varphi(0))\\
&=&\frac{1}{(1-\varepsilon)\|x\|_{\varphi}}{\rm Tr}(\varphi(x)),
\end{eqnarray*}
which imply
$$(1-\varepsilon)\|x\|_{\varphi}<{\rm Tr}(\varphi(x)),$$
let ~$\varepsilon\rightarrow0$ we can get the conclusion.

$(2)$ For any $\varepsilon>0$, there exists $\lambda_{\varepsilon}>0$ such that ${\rm Tr}\left(\varphi\left(\frac{x}{\lambda_{\varepsilon}}\right)\right)\leq1$, by the definition of Luxemburg norm, if $\lambda_{\varepsilon}<\varepsilon+\|x\|_{\varphi}$
one get that ${\rm Tr}\left(\varphi\left(\frac{x}{\|x\|_{\varphi}+\varepsilon}\right)\right)\leq1$, by the arbitrariness of $\varepsilon$ we can get,
$${\rm Tr}\left(\varphi\left(\frac{x}{\|x\|_{\varphi}}\right)\right)\leq1.$$

$(3)$ If $x\in S_{\varphi}$ and $\|x\|_{\varphi}\leq1$, let $x\neq0$. By the definition of $\|x\|_{\varphi}$, there exists a decreasing sequence $\{\lambda_{n}\}$ which converges to $\|x\|_{\varphi}$  such that
$${\rm Tr}\left(\varphi\left(\frac{x}{\lambda_{n}}\right)\right)=\sum\limits_{n\geq0}\varphi\left(\frac{1}{\lambda_{n}}s_{n}(x)\right)\leq1.$$
By Levy's theorem,
$${\rm Tr}\left(\varphi\left(\frac{x}{\|x\|_{\varphi}}\right)\right)=\sum\limits_{n\geq0}\varphi\left(\frac{1}{\|x\|_{\varphi}}s_{n}(x)\right)\leq1.$$
Thanks for the convexity of $\varphi$ and $\|x\|_{\varphi}\leq1$,
$$\frac{1}{\|x\|_{\varphi}}{\rm Tr}(\varphi(x))\leq{\rm Tr}\left(\varphi\left(\frac{x}{\|x\|_{\varphi}}\right)\right)\leq1,$$
hence,
$${\rm Tr}(\varphi(x))=\sum\limits_{n\geq0}\varphi\left(s_{n}(x)\right)\leq \|x\|_{\varphi}.$$

$(4)$ By $(3)$ and the definition of the Orlicz norm we have
${\rm Tr}\left(\frac{|xy|}{\|y\|_{\psi}}\right)\leq\|x\|^{o}_{\varphi}$, which could get the conclusion.

$(5)$ \begin{eqnarray*}
{\rm Tr}(|xy|)&=&\sum^{\infty}_{n+m=0}s_{n+m+1}(|xy|)\\
&\leq&\sum^{\infty}_{n+m=0}\left(s_{n+1}(|x|)s_{m+1}(|y|)\right)\\
&\leq&\sum^{\infty}_{n+m=0}\left(\varphi\left(s_{n+1}(|x|)\right)+\psi\left(s_{m+1}(|y|)\right)\right)\\
&<&\infty.
\end{eqnarray*}
In the above we have used 2.13 of \cite{GK} and the Orlicz Young inequality for the first and the second inequalities, respectively. Hence, we have $xy\in S_{1}$. Using the same method, one can get $yx\in S_{1}$.
\end{proof}

\begin{theorem}
If $\varphi\in \delta_{2}(0)$, for any $x, y \in S_{\varphi}$ we have

$(1)$ ${\rm Tr}\left(\varphi\left(\frac{x}{\|x\|_{\varphi}}\right)\right)=1$;

$(2)$ $\|x\|_{\varphi}=1$ if and only if ${\rm Tr}(\varphi(x))=1$;

$(3)$ ${\rm Tr}(\varphi(x_{n}))\rightarrow0$ if and only if $\|x_{n}\|_{\varphi}\rightarrow0$;

$(4)$ ${\rm Tr}(\varphi(x+y))\leq \frac{k}{2} \left[{\rm Tr}(\varphi(x))+{\rm Tr}(\varphi(y))\right]$, where $k$ is the real number which satisfies for any $\alpha>0$ such that $\varphi(2\alpha)\leq k \varphi(\alpha)$.
\end{theorem}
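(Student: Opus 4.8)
The plan is to handle the four assertions in order, using throughout that $\varphi\in\delta_2(0)$ forces $S_\varphi=E_\varphi$ by Theorem 2.1, so that the modular
$\rho_x(\lambda):={\rm Tr}\left(\varphi\left(x/\lambda\right)\right)=\sum_{n\ge0}\varphi\left(s_n(|x|)/\lambda\right)$
is finite for \emph{every} $\lambda>0$. For (1) I would assume $x\neq0$ and study $\rho_x$ as a function of $\lambda$. Since $\varphi$ is nondecreasing, $\rho_x$ is nonincreasing; since each term $\varphi\left(s_n(|x|)/\lambda\right)$ decreases to $\varphi\left(s_n(|x|)/L\right)$ as $\lambda\uparrow L$ and the limit $\rho_x(L)$ is finite (because $S_\varphi=E_\varphi$), monotone convergence gives left-continuity of $\rho_x$. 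By the definition of the Luxemburg norm $\rho_x(\lambda)>1$ for every $\lambda<\|x\|_\varphi$, so letting $\lambda\uparrow\|x\|_\varphi$ yields $\rho_x(\|x\|_\varphi)\ge1$; combined with Lemma 3.1(2), which gives the reverse inequality, this forces ${\rm Tr}\left(\varphi\left(x/\|x\|_\varphi\right)\right)=1$.

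Assertion (2) then follows quickly. If $\|x\|_\varphi=1$, part (1) gives ${\rm Tr}(\varphi(x))=1$ directly. Conversely, if ${\rm Tr}(\varphi(x))=1$ then $\rho_x(1)=1\le1$, so $\|x\|_\varphi\le1$ by definition; were $\|x\|_\varphi<1$, part (1) would give $\rho_x(\|x\|_\varphi)=1=\rho_x(1)$, contradicting the strict monotonicity of $\rho_x$, which holds because $\varphi$ is convex with $\varphi(0)=0$ and $\varphi(\alpha)>0$ for $\alpha>0$, hence strictly increasing, and $x\neq0$. Thus $\|x\|_\varphi=1$.

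For (3), the implication $\|x_n\|_\varphi\to0\Rightarrow{\rm Tr}(\varphi(x_n))\to0$ needs no $\delta_2$ hypothesis: once $\|x_n\|_\varphi\le1$, Lemma 3.1(3) gives ${\rm Tr}(\varphi(x_n))\le\|x_n\|_\varphi$. The converse is where $\delta_2(0)$ is essential. From ${\rm Tr}(\varphi(x_n))\to0$ I would first observe that the largest singular value of $x_n$ satisfies $\varphi(s_{\ast}(|x_n|))\le{\rm Tr}(\varphi(x_n))\to0$, so $s_{\ast}(|x_n|)\to0$ and hence \emph{all} singular values of $x_n$ eventually lie below the threshold $u_0$ on which the doubling estimate is valid. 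Iterating $\varphi(2u)\le k\varphi(u)$ gives $\varphi(2^m u)\le k^m\varphi(u)$ whenever $2^{m-1}u\le u_0$; fixing $m$ and taking $n$ large enough that $s_{\ast}(|x_n|)\le u_0/2^{m-1}$ yields ${\rm Tr}\left(\varphi(2^m x_n)\right)\le k^m{\rm Tr}(\varphi(x_n))\to0\le1$, whence $\|x_n\|_\varphi\le2^{-m}$. As $m$ is arbitrary, $\|x_n\|_\varphi\to0$.

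Finally, for (4) the noncommutativity is the main obstacle, since $s_n(|x+y|)\ne s_n(|x|)+s_n(|y|)$ in general, so the estimate cannot be performed termwise at the outset. I would bridge this with the Ky Fan inequality (the singular-value estimate already invoked through 2.13 of \cite{GK}): the partial sums satisfy $\sum_{n=1}^N s_n(|x+y|)\le\sum_{n=1}^N\left(s_n(|x|)+s_n(|y|)\right)$, i.e.\ $\{s_n(|x+y|)\}$ is weakly submajorized by the decreasing sequence $\{s_n(|x|)+s_n(|y|)\}$. Because $\varphi$ is increasing and convex with $\varphi(0)=0$, it preserves weak submajorization of sums, so ${\rm Tr}(\varphi(x+y))\le\sum_{n\ge0}\varphi\left(s_n(|x|)+s_n(|y|)\right)$. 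Each summand is then controlled by convexity together with the doubling constant, $\varphi\left(s_n(|x|)+s_n(|y|)\right)=\varphi\left(2\cdot\tfrac{s_n(|x|)+s_n(|y|)}{2}\right)\le k\,\varphi\left(\tfrac{s_n(|x|)+s_n(|y|)}{2}\right)\le\tfrac{k}{2}\left(\varphi(s_n(|x|))+\varphi(s_n(|y|))\right)$, and summing over $n$ gives the claimed bound $\tfrac{k}{2}\left[{\rm Tr}(\varphi(x))+{\rm Tr}(\varphi(y))\right]$. The delicate point—where I expect to lean on the global form of the doubling inequality implicit in the statement—is that $\delta_2(0)$ only supplies $\varphi(2\alpha)\le k\varphi(\alpha)$ for small $\alpha$, so the finitely many large singular values must either be absorbed separately or the inequality assumed for all $\alpha>0$ as the theorem's phrasing suggests.
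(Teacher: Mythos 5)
Your proposal is correct, but it proves considerably more by hand than the paper does, and in part (4) it takes a genuinely different route. For (1)--(3) the paper simply transfers the operator modular to the singular-value sequence and cites the classical Orlicz-sequence literature: (1) is Proposition 0 of \cite{Anna}, (2) is Remark 8.15 and (3) is Theorem 8.14 of \cite{Musielak}. Your direct arguments --- left-continuity of $\lambda\mapsto{\rm Tr}\left(\varphi\left(x/\lambda\right)\right)$ via monotone/dominated convergence (legitimate precisely because $\delta_{2}(0)$ gives $S_{\varphi}=E_{\varphi}$ by Theorem 2.1, so the modular is finite at every $\lambda$, which is what the domination requires), strict monotonicity of the modular for the converse in (2), and the iterated doubling estimate $\varphi(2^{m}u)\leq k^{m}\varphi(u)$ for $2^{m-1}u\leq u_{0}$ together with $\varphi(s_{0}(|x_{n}|))\leq{\rm Tr}(\varphi(x_{n}))\rightarrow0$ in (3) --- are essentially the proofs behind those citations, so your version is self-contained where the paper is not. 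For (4) the paper works at the operator level: it invokes the triangle inequality $|x+y|\leq u|x|u^{\ast}+v|y|v^{\ast}$ with partial isometries $u,v$, then uses convexity, the doubling constant, and $s_{n}(u|x|u^{\ast})\leq s_{n}(|x|)$; you instead stay at the level of sequences, combining the Ky Fan inequality (weak submajorization of $\{s_{n}(|x+y|)\}$ by $\{s_{n}(|x|)+s_{n}(|y|)\}$, available from \cite{GK}) with the standard fact that a nondecreasing convex $\varphi$ with $\varphi(0)=0$ preserves weak submajorization, and only then apply doubling plus convexity termwise. Both routes are valid and yield the same constant $\frac{k}{2}$; the paper's needs the nontrivial operator decomposition, yours needs the majorization lemma. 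Finally, you are right to flag that both arguments use the global inequality $\varphi(2\alpha)\leq k\varphi(\alpha)$ for all $\alpha>0$ --- which is what the statement of (4) actually postulates --- rather than $\delta_{2}(0)$ alone; the paper applies it silently without handling the finitely many large singular values, so your explicit caveat is, if anything, more careful than the original.
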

\begin{proof}
$(1)$ Let $x\in S_{\varphi}$, by Proposition 0 of \cite{Anna},
\begin{eqnarray*}
{\rm Tr}\left(\varphi\left(\frac{x}{\|x\|_{\varphi}}\right)\right)&=&\sum_{n\geq0}\varphi\left(\frac{1}{\|x\|_{\varphi}}s_{n}(|x|)\right)\\
&=&\sum_{n\geq0}\varphi\left(\frac{1}{\|s(x;n)\|}s_{n}(|x|)\right)\\
&=&1.
\end{eqnarray*}
$(2)$ If $\|x\|_{\varphi}=1$, then obvious we have ${\rm Tr}(\varphi(x))=1$. If ~${\rm Tr}(\varphi(x))=1$, by remark 8.15 of \cite{Musielak} we can get the conclusion.

$(3)$ This can be get by the Theorem 8.14 of \cite{Musielak} .

$(4)$ We know there exist partially isometric operator $u,v$ such that $|x+y|\leq u|x|u^{*}+v|x|v^{*}$, by the convexity of $\varphi$,
\begin{eqnarray*}
{\rm Tr}(\varphi(x+y))&\leq& {\rm Tr}\left(\varphi\left(u|x|u^{*}+v|y|v^{*}\right)\right)\\
&=&{\rm Tr}\left(\varphi\left(2\frac{u|x|u^{*}+v|y|v^{*}}{2}\right)\right)\\
&\leq&\frac{k}{2}\left[{\rm Tr}\left(\varphi\left(u|x|u^{*}\right)\right)+{\rm Tr}\left(\varphi\left(v|y|v^{*}\right)\right)\right]\\
&=&\frac{k}{2}\left[\sum_{n\geq0}\varphi\left(s_{n}(u|x|u^{*})\right)+\sum_{n\geq0}\varphi\left(s_{n}(v|y|v^{*})\right)\right]\\
&\leq&\frac{k}{2}\left[\sum_{n\geq0}\varphi\left(s_{n}(|x|)\right)+\sum_{n\geq0}\varphi\left(s_{n}(|y|)\right)\right]\\
&=&\frac{k}{2}\left[{\rm Tr}(\varphi(x))+{\rm Tr}(\varphi(y))\right].
\end{eqnarray*}
\end{proof}
\begin{corollary}
$(1)$ ${\rm Tr}\left(\frac{x^{p}}{\|x\|_{\varphi}^{p}}\right)=1.$ Especially, if $x$ belongs to $S_{1}$ and $S_{2}$ respectively, we have ${\rm Tr}\left(\frac{x}{\|x\|_{1}^{1}}\right)=1$ and ${\rm Tr}\left(\frac{x^{2}}{\|x\|_{2}^{2}}\right)=1.$

$(2)$ If $x$ is a Schatten operator, then $\varphi(2x)=|2x|^{p}=2^{p}|x|^{p}$. Let $k=2^{p}$ we can get
$${\rm Tr}\left(|x+y|^{p}\right)\leq 2^{p-1}\left[{\rm Tr}\left(|x|^{p}\right)+{\rm Tr}\left(|y|^{p}\right)\right]$$
or
$$\sum_{n\geq0}s_{n}\left(|x+y|^{p}\right)\leq 2^{p-1}\left[\sum_{n\geq0}s_{n}\left(|x|^{p}\right)+\sum_{n\geq0}s_{n}\left(|y|^{p}\right)\right].$$
Especially, for $x\in S_{1}$, we have
$$\sum_{n\geq0}s_{n}\left(|x+y|\right)\leq \sum_{n\geq0}s_{n}\left(|x|\right)+\sum_{n\geq0}s_{n}\left(|y|\right);$$
For $x\in S_{2}$, we have
$$\sum_{n\geq0}s_{n}\left(|x+y|^{2}\right)\leq 2\left[\sum_{n\geq0}s_{n}\left(|x|^{2}\right)+\sum_{n\geq0}s_{n}\left(|y|^{2}\right)\right].$$

$(3)$ (H$\ddot{o}$lder Inequality of $S_{p}$): If $x\in S_{p}, \ y\in S_{q}$, where $\frac{1}{p}+\frac{1}{q}=1$ with $p\geq1$,  we have $${\rm Tr}(|xy|)\leq \|x\|_{p}\|y\|_{q}$$ or
\begin{eqnarray*}
\sum_{n=1}^{\infty}s_{n}(|xy|)&\leq&\left(\sum_{n=1}^{\infty}s^{p}_{n}(|x|)\right)^{\frac{1}{p}}\left(\sum_{n=1}^{\infty}s^{q}_{n}(|y|)\right)^{\frac{1}{q}}\\
&=&\left({\rm Tr}\left(|x|^{p}\right)\right)^{\frac{1}{p}}\left({\rm Tr}\left(|x|^{q}\right)\right)^{\frac{1}{q}}.
\end{eqnarray*}
\begin{proof}
First, if $x\in S_{\varphi}$, similar with the theorem 1.29 of \cite{Chen} we know, if there exists $k>0$ such that $\sum\limits_{n=1}^{\infty}\psi(h(s_{n}(k|x|)))=1$, then $\|x\|_{\varphi}^{o}=\frac{1}{k}\left[1+{\rm Tr}(\varphi(k(|x|)))\right]$, where $\psi$ is the  complementary function of $\varphi$ such that $\varphi(x)=\int_{0}^{|x|}h(t){\rm d}t$.

Now, if $x\in S_{p}$ which means $\varphi(x)=|x|^{p}$, then $h(t)=pt^{p-1}$ and $\psi(y)=cy^{q}$, where $c=\frac{1}{p^{\frac{p}{q}}}\cdot\frac{1}{q}$. If
$\sum\limits_{n=1}^{\infty}\psi(h(s_{n}(k|x|)))=1$, then
\begin{eqnarray*}
\sum\limits_{n=1}^{\infty}\psi(pk^{p-1}(s^{p-1}_{n}(|x|))&=& \sum\limits_{n=1}^{\infty}cp^{q}k^{(pq-q)}(s^{(pq-q)}_{n}(|x|))\\
&=&\sum\limits_{n=1}^{\infty}\frac{1}{p^{\frac{q}{p}}}\frac{1}{q}p^{q}k^{p}(s^{p}_{n}(|x|))\\
&=&\sum\limits_{n=1}^{\infty}\frac{p}{q}k^{p}(s^{p}_{n}(|x|))\\
&=&\sum\limits_{n=1}^{\infty}(p-1)k^{p}(s^{p}_{n}(|x|))\\
&=&1
\end{eqnarray*}
hence, $$k^{p}=\frac{1}{(p-1)\sum\limits_{n=1}^{\infty}(s^{p}_{n}(|x|))}$$
or $$k=\frac{1}{(p-1)^{\frac{1}{p}}\|x\|_{p}}.$$
Now one can get
\begin{eqnarray*}
\|x\|_{\varphi}^{o}&=&\frac{1}{k}\left[1+{\rm Tr}(\varphi(k(|x|)))\right]\\
&=&(p-1)^{\frac{1}{p}}\|x\|_{p}\left[1+\sum\limits_{n=1}^{\infty}k^{p}(s^{p}_{n}(|x|))\right]\\
&=&(p-1)^{\frac{1}{p}}\|x\|_{p}\left[1+\frac{\sum\limits_{n=1}^{\infty}(s^{p}_{n}(|x|))}{(p-1)\sum\limits_{n=1}^{\infty}(s^{p}_{n}(|x|))}\right]\\
&=&p(p-1)^{\frac{1}{p}-1}\|x\|_{p}\\
&=&\frac{p}{(p-1)^{\frac{1}{q}}}\|x\|_{p}.\\
\end{eqnarray*}
On the other hand,
\begin{eqnarray*}
\|y\|_{\varphi}&=&\inf\left\{\lambda\geq0, \ \ {\rm Tr}\left(\psi\left(\frac{y}{\lambda}\right)\right)\leq1\right\}\\
&=&\inf\left\{\lambda\geq0, \ \ \sum\limits_{n=1}^{\infty}\left(\psi\left(\frac{s_{n}(|y|)}{\lambda}\right)\right)\leq1\right\}\\
&=&\inf\left\{\lambda\geq0, \ \ \sum\limits_{n=1}^{\infty}c\left(\frac{s_{n}^{q}(|y|)}{\lambda^{q}}\right)\leq1\right\}\\
&=&\inf\left\{\lambda\geq0, \ \ c^{\frac{1}{q}}\sum\limits_{n=1}^{\infty}\left(s_{n}^{q}(|y|)\right)^{\frac{1}{q}}\leq\lambda\right\}\\
&=&\frac{1}{p^{\frac{1}{p}}}\frac{1}{q^{\frac{1}{q}}}\|y\|_{q}.
\end{eqnarray*}
Finally, by (4) of Lemma 3.1, we get
$${\rm Tr}(|xy|)\leq \|x\|_{\varphi}^{o}\|y\|_{\varphi}=\frac{p}{(p-1)^{\frac{1}{q}}}\|x\|_{p}\cdot\frac{1}{p^{\frac{1}{p}}}\frac{1}{q^{\frac{1}{q}}}\|y\|_{q}=\|x\|_{p}\|y\|_{q},$$
or
$$\sum_{n=1}^{\infty}s_{n}(|xy|)\leq\left(\sum_{n=1}^{\infty}s^{p}_{n}(|x|)\right)^{\frac{1}{p}}\left(\sum_{n=1}^{\infty}s^{q}_{n}(|y|)\right)^{\frac{1}{q}}.$$
In particular, for $p=2$, we get the Schwarz inequality 
$${\rm Tr}(|xy|)\leq \|x\|_{2}\|y\|_{2},$$
or
$$\sum_{n=1}^{\infty}s_{n}(|xy|)\leq\left(\sum_{n=1}^{\infty}s^{2}_{n}(|x|)\right)^{\frac{1}{2}}\left(\sum_{n=1}^{\infty}s^{2}_{n}(|y|)\right)^{\frac{1}{2}}.$$
\end{proof}
\begin{remark}
$(i)$ For above corollary, if $p=1$ then $q=\infty$, we also have $${\rm Tr}(|xy|)\leq \|x\|_{1}\|y\|_{\infty}={\rm Tr}(|x|)\|y\|_{\infty},$$ and $${\rm Tr}(|yx|)\leq \|y\|_{1}\|x\|_{\infty}={\rm Tr}(|y|)\|x\|_{\infty},$$ where $\|\cdot\|_{\infty}$ is the operator norm, this means that $S_{1}$ is a two-sides ideal of $\mathcal{B}(\mathbb{H})$.

$(ii)$ This corollary is just be the H$\ddot{o}$lder Inequality has been obtained by Ruskai in \cite{Ruskai}.
\end{remark}

$(4)$ If $\varphi(x)$ is invertible on $\mathbb{R}^{+}$, and $\varphi(x)\in \delta_{2}(0)$, then $$\|e_{n}\otimes e_{n}\|_{\varphi}=\frac{1}{\varphi^{-1}(1)}.$$
\begin{proof}
Since $\|e_{n}\otimes e_{n}\|=1$ and $e_{n}\otimes e_{n}$ is a one dimensional compact operator , where $\|\cdot\|$  is the operator norm, then $s_{n}(e_{n}\otimes e_{n})=1$ for any $n=1,2,\cdots$. Hence, by (1) of theorem 3.1
$$1={\rm Tr}\left(\varphi\left(\frac{e_{n}\otimes e_{n}}{\|e_{n}\otimes e_{n}\|_{\varphi}}\right)\right)=\varphi\left(s_{n}\left(\frac{e_{n}\otimes e_{n}}{\|e_{n}\otimes e_{n}\|_{\varphi}}\right)\right)=\varphi\left(\frac{1}{\|e_{n}\otimes e_{n}\|_{\varphi}}\right),$$
hence we can get the conclusion.
\end{proof}
\begin{example}
In \cite{Streater}, in order to study entropy in information geometry the author consider the quantum Orlicz function $\varphi(x)=coshx-1$ with $x\geq1$. It is obvious  a Orlicz function and $\varphi\in \delta_{2}(0)$, then we have $\|e_{n}\otimes e_{n}\|_{\varphi}=\frac{1}{\ln(2+\sqrt{3})}.$
In fact, since $\varphi(x)=coshx-1$ we have $\varphi^{-1}(1)=\ln(2+\sqrt{3})$, by (3) of corollary 3.1 we can get the conclusion.
\end{example}
\end{corollary}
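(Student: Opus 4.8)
The plan is to read off all three parts of the corollary from the general machinery already established for $S_{\varphi}$, by taking the Orlicz function to be the power function $\varphi(t)=t^{p}$. The decisive observation is that $\varphi(t)=t^{p}$ lies in $\delta_{2}(0)$, since $\varphi(2t)=2^{p}t^{p}=2^{p}\varphi(t)$ with the \emph{global} doubling constant $k=2^{p}$; moreover its Luxemburg norm collapses to the Schatten norm, $\|x\|_{\varphi}=\inf\{\lambda>0:\lambda^{-p}\sum_{n}s_{n}(|x|)^{p}\le1\}=\big(\sum_{n}s_{n}(|x|)^{p}\big)^{1/p}=\|x\|_{p}$. With these two facts in hand, parts (1) and (2) are one-line specializations. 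For (1) I would apply Theorem 3.1(1): since $\varphi\in\delta_{2}(0)$ we have ${\rm Tr}(\varphi(x/\|x\|_{\varphi}))=1$, and expanding $\varphi(t)=t^{p}$ gives ${\rm Tr}(x^{p}/\|x\|_{\varphi}^{p})=1$; the two displayed special cases are the substitutions $p=1$ and $p=2$. For (2) I would apply Theorem 3.1(4) with the constant $k=2^{p}$ found above, so that $k/2=2^{p-1}$ and the bound becomes ${\rm Tr}(|x+y|^{p})\le 2^{p-1}[{\rm Tr}(|x|^{p})+{\rm Tr}(|y|^{p})]$; here $p=1$ recovers the trace-norm triangle inequality and $p=2$ the constant-$2$ estimate.

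Part (3), the Schatten H\"older inequality, is the substantive one. The strategy is to specialize the abstract H\"older inequality of Lemma 3.1(4), namely ${\rm Tr}(|xy|)\le\|x\|_{\varphi}^{o}\|y\|_{\psi}$, and then to show that the concrete Orlicz and Luxemburg constants produced by $\varphi(t)=t^{p}$ and its complementary function multiply to $1$. I would proceed in three computational steps. First, with $h(t)=\varphi'(t)=pt^{p-1}$, determine the complementary function $\psi(s)=\sup_{v\ge0}(sv-\varphi(v))=cs^{q}$ for an explicit constant $c=c(p,q)$, where $1/p+1/q=1$. Second, evaluate the Orlicz norm via the Amemiya-type formula $\|x\|_{\varphi}^{o}=\tfrac{1}{k}\big(1+{\rm Tr}(\varphi(k|x|))\big)$ of Theorem 1.29 of \cite{Chen}: I would solve the optimality equation $\sum_{n}\psi\big(h(s_{n}(k|x|))\big)=1$ for the distinguished $k$ in terms of $\|x\|_{p}$, then substitute to get $\|x\|_{\varphi}^{o}=C_{1}(p)\|x\|_{p}$. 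Third, compute $\|y\|_{\psi}$ directly from the Luxemburg infimum for $\psi(s)=cs^{q}$, giving $\|y\|_{\psi}=C_{2}(p)\|y\|_{q}$. Feeding these into Lemma 3.1(4) yields ${\rm Tr}(|xy|)\le C_{1}(p)C_{2}(p)\|x\|_{p}\|y\|_{q}$, and the claim follows once $C_{1}(p)C_{2}(p)=1$.

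The main obstacle is precisely this last piece of bookkeeping: carrying the powers of $p$, $q$, and $p-1$ correctly through the conjugation step, using the identity $(p-1)q=p$ to collapse $\sum_{n}\psi(h(s_{n}(k|x|)))$ to a constant multiple of $\sum_{n}s_{n}(|x|)^{p}=\|x\|_{p}^{p}$, solving for $k=\big((p-1)^{1/p}\|x\|_{p}\big)^{-1}$, and verifying that the resulting constants $C_{1}(p)=p(p-1)^{\frac{1}{p}-1}$ and $C_{2}(p)=p^{-1/p}q^{-1/q}$ satisfy $C_{1}(p)C_{2}(p)=1$ (which again reduces to $(p-1)q=p$). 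Nothing is conceptually difficult here — it is a single Legendre-transform computation followed by this one algebraic cancellation — but the normalization of $\psi$ must be tracked with care, since any slip there would destroy the final identity. As a built-in check, the case $p=q=2$ specializes to the Schwarz inequality ${\rm Tr}(|xy|)\le\|x\|_{2}\|y\|_{2}$.
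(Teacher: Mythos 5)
Your proposal is correct and follows essentially the same route as the paper: parts (1) and (2) are read off from Theorem 3.1 with $\varphi(t)=t^{p}$ and doubling constant $k=2^{p}$, and part (3) is proved exactly as in the paper, via the complementary function $\psi(s)=cs^{q}$, the Amemiya-type formula $\|x\|_{\varphi}^{o}=\frac{1}{k}\bigl(1+{\rm Tr}(\varphi(k|x|))\bigr)$ from Theorem 1.29 of \cite{Chen}, the Luxemburg computation $\|y\|_{\psi}=p^{-1/p}q^{-1/q}\|y\|_{q}$, and Lemma 3.1(4), with the same constants $C_{1}(p)=p(p-1)^{\frac{1}{p}-1}$ and $C_{2}(p)=p^{-1/p}q^{-1/q}$ cancelling via $(p-1)q=p$. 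Your explicit verification that $t^{p}\in\delta_{2}(0)$ is a small point the paper leaves implicit, but otherwise the two arguments coincide.
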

\begin{theorem}
If $x\in \mathcal{B}(\mathbb{H})$ is a positive compact operator, then $x\in S_{\varphi}$ if and only if $\varphi(x)\in S_{1}$.
\end{theorem}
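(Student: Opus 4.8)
The plan is to reduce the operator statement to a scalar statement about the singular value sequence and the Orlicz function, and then read off the two implications separately. Since $x$ is positive and compact we have $|x|=x$, and the spectral decomposition gives $x=\sum_{n\geq 0}s_{n}(x)\,e_{n}\otimes e_{n}$ for an orthonormal eigenbasis $\{e_{n}\}$. Functional calculus then yields $\varphi(x)=\sum_{n\geq 0}\varphi(s_{n}(x))\,e_{n}\otimes e_{n}$, which is again positive and compact; because $\varphi$ is nondecreasing and the $s_{n}(x)$ decrease to $0$, the numbers $\varphi(s_{n}(x))$ are exactly the (decreasingly ordered) singular values of $\varphi(x)$. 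Hence, as already recorded in the opening of Section 2, $\varphi(x)\in S_{1}$ holds if and only if ${\rm Tr}(\varphi(x))=\sum_{n\geq 0}\varphi(s_{n}(x))<\infty$. In other words, the whole theorem is the equivalence between ``there exists $\lambda>0$ with $\sum_{n}\varphi(\lambda s_{n}(x))<\infty$'' (i.e.\ $x\in S_{\varphi}$) and ``$\sum_{n}\varphi(s_{n}(x))<\infty$'' (i.e.\ the modular of the singular value sequence is finite at the scale $\lambda=1$).

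Granting this reformulation, the implication $\varphi(x)\in S_{1}\Rightarrow x\in S_{\varphi}$ is immediate: if $\sum_{n}\varphi(s_{n}(x))<\infty$, then $\lambda=1$ is an admissible witness in Definition 2.1, so $x\in S_{\varphi}$. For the converse, suppose $x\in S_{\varphi}$ with witness $\lambda_{0}>0$, so $\sum_{n}\varphi(\lambda_{0}s_{n}(x))<\infty$. The case $\lambda_{0}\geq 1$ is easy: since $s_{n}(x)\leq \lambda_{0}s_{n}(x)$ and $\varphi$ is nondecreasing, one has $\varphi(s_{n}(x))\leq \varphi(\lambda_{0}s_{n}(x))$ termwise, and summing gives $\sum_{n}\varphi(s_{n}(x))<\infty$, that is $\varphi(x)\in S_{1}$.

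I expect the subcase $0<\lambda_{0}<1$ to be the crux, and it is exactly here that the $\delta_{2}(0)$ growth control from Section 2 must enter. Indeed, passing from the scale $\lambda_{0}$ up to the scale $1$ enlarges the argument of $\varphi$, and convexity pushes the wrong way (it only gives $\varphi(\lambda_{0}s_{n}(x))\leq \lambda_{0}\varphi(s_{n}(x))$, a lower bound on $\varphi(s_{n}(x))$). Assuming $\varphi\in\delta_{2}(0)$, Theorem 2.1 yields $S_{\varphi}=E_{\varphi}$, so $x\in E_{\varphi}$ and the modular $\sum_{n}\varphi(\lambda s_{n}(x))$ is finite for every $\lambda>0$, in particular for $\lambda=1$, which is precisely $\varphi(x)\in S_{1}$. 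Alternatively one reruns the splitting of that proof directly: with $n_{0}=\sup\{n:\ s_{n}(x)\leq u_{0}\}$ one bounds the finitely many large terms by $n_{0}\varphi(s_{1}(x))$ and dominates the tail by writing $\varphi(s_{n}(x))=\varphi(\tfrac{1}{\lambda_{0}}\lambda_{0}s_{n}(x))\leq k\,\varphi(\lambda_{0}s_{n}(x))$ for $s_{n}(x)\leq u_{0}$. It is worth stressing that this hypothesis is not cosmetic: without $\delta_{2}(0)$ one can construct a positive diagonal operator with $\sum_{n}\varphi(\lambda_{0}s_{n})<\infty$ but $\sum_{n}\varphi(s_{n})=\infty$, so the forward implication truly relies on the $\delta_{2}(0)$ condition inherited from Theorem 2.1.
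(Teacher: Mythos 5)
Your proposal is correct, and it is in fact more complete than the paper's own proof. The paper's argument consists of exactly your first paragraph: by the spectral mapping theorem the singular values of $\varphi(x)$ are the numbers $\varphi(\lambda_{n})=\varphi(s_{n}(x))$ (decreasingly ordered, since $\varphi$ is nondecreasing and continuous with $\varphi(0)=0$), after which it simply declares ``we can get the conclusion.'' That declaration silently identifies membership in $S_{\varphi}$ (finiteness of $\sum_{n}\varphi(\lambda s_{n}(x))$ for \emph{some} $\lambda>0$) with finiteness at the particular scale $\lambda=1$, which is what $\varphi(x)\in S_{1}$ literally says. As you observe, the backward implication and the subcase $\lambda_{0}\geq 1$ are trivial by monotonicity, but the subcase $0<\lambda_{0}<1$ genuinely requires $\varphi\in\delta_{2}(0)$, a hypothesis the theorem as stated does not carry; your warning is the classical fact that $l_{\varphi}$ differs from its subspace of everywhere-finite modulars exactly when $\varphi\notin\delta_{2}(0)$. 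Concretely, pick $u_{n}\downarrow 0$ with $\varphi(2u_{n})>2^{n}\varphi(u_{n})$ and multiplicities $k_{n}$ with $k_{n}\varphi(u_{n})\leq 2^{-n}$ and $k_{n}\varphi(2u_{n})\geq 1$; the positive diagonal compact operator with eigenvalue $2u_{n}$ repeated $k_{n}$ times lies in $S_{\varphi}$ (witness $\lambda_{0}=\tfrac{1}{2}$) while $\sum_{n}k_{n}\varphi(2u_{n})=\infty$, so $\varphi(x)\notin S_{1}$. So either the theorem should be read with the weaker conclusion ``$\varphi(\lambda x)\in S_{1}$ for some $\lambda>0$,'' for which the paper's one-line spectral argument suffices, or it needs $\varphi\in\delta_{2}(0)$ as you supply it, in which case your appeal to Theorem 2.1 (equivalently, a rerun of its tail estimate with $\varphi(s_{n}(x))\leq k^{m}\varphi(\lambda_{0}s_{n}(x))$ for $2^{-m}\leq\lambda_{0}$ on the tail, plus a finite bound $n_{0}\varphi(s_{1}(x))$ on the large terms) closes the argument. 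In short: the gap you flag is real, and it sits in the paper's proof, not in yours.
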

\begin{proof}
Since $x$ is a positive compact operator, then
$$\varphi(x)\eta=\sum^{\infty}_{n=1}\varphi(\lambda_{n})\langle\eta,e_{n}\rangle e_{n},\ \ \eta\in \mathbb{H}$$
where $\{0\}\cup\{\varphi(\lambda_{n})\}$ is spectral set. By the spectral mapping theorem, $\{\varphi(\lambda_{n})\}=\sigma(\varphi(x))\backslash\{0\}$.
Since $\varphi$ is nondecreasing and $\varphi(\lambda_{n})$ is the singular value sequence of $\varphi(x)$ we can get the conclusion.
\end{proof}
\begin{theorem}
For any $x\in S_{\varphi}$, we have

$(1)$ $\|x\|_{\varphi}=\||x|\|_{\varphi}=\|x^{\ast}\|_{\varphi};$

$(2)$ $S_{\varphi}$ is a $*$-two sides ideal of $\mathcal{B}(\mathbb{H})$, that means for any $y,z \in \mathcal{B}(\mathbb{H})$, then one have $yxz, zxy \in S_{\varphi}$ and $$\|yxz\|_{\varphi}\leq \|y\|_{\infty}\|x\|_{\varphi}\|z\|_{\infty}.$$
\end{theorem}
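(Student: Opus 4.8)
The plan is to exploit the identity $\|x\|_{\varphi}=\|s(x;n)\|$ recorded just after the definition of the Luxemburg norm: this norm depends on the operator $x$ only through its singular value sequence, via the classical Orlicz sequence norm. Every assertion in the theorem will be reduced to a statement about singular values.

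For $(1)$ I would argue that $x$, $|x|$ and $x^{\ast}$ carry the same singular value sequence. Indeed $|x|=(x^{\ast}x)^{1/2}$ is positive, so its singular values are its eigenvalues, i.e. $s_{n}(|x|)=s_{n}(x)$ by our convention; and since $x^{\ast}x$ and $xx^{\ast}$ have the same non-zero spectrum, $|x^{\ast}|=(xx^{\ast})^{1/2}$ shares the eigenvalue sequence of $|x|$, so $s_{n}(x^{\ast})=s_{n}(x)$ for every $n$. As $\|\cdot\|_{\varphi}$ is a function of this common sequence alone, the three norms coincide.

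The heart of $(2)$ is the singular value estimate
$$s_{n}(yxz)\leq \|y\|_{\infty}\,\|z\|_{\infty}\,s_{n}(x),\qquad y,z\in\mathcal{B}(\mathbb{H}).$$
I would obtain it from the approximation description $s_{n}(x)=\inf\{\|x-F\|_{\infty}:{\rm rank}\,F<n\}$ (the same circle of ideas as 2.13 of \cite{GK} invoked in Lemma 3.1): if ${\rm rank}\,F<n$ then ${\rm rank}\,(yFz)<n$ and $\|yxz-yFz\|_{\infty}\leq\|y\|_{\infty}\|x-F\|_{\infty}\|z\|_{\infty}$, and taking the infimum over such $F$ gives the bound. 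Writing $M=\|y\|_{\infty}\|z\|_{\infty}$, the inequality $s_{n}(yxz)\leq M s_{n}(x)$ together with the monotonicity and positive homogeneity of the classical Orlicz sequence norm yields
$$\|yxz\|_{\varphi}=\|s(yxz;n)\|\leq\|(Ms_{n}(x))_{n}\|=M\,\|s(x;n)\|=\|y\|_{\infty}\|x\|_{\varphi}\|z\|_{\infty}.$$

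In particular $yxz$ has finite Luxemburg norm, and being a product of a compact operator with bounded operators it is compact, so $yxz\in S_{\varphi}$; the operator $zxy$ is treated identically, and closure under $\ast$ follows by applying $(1)$ to $(yxz)^{\ast}=z^{\ast}x^{\ast}y^{\ast}$. Thus $S_{\varphi}$ is a $\ast$-two-sided ideal. I expect the only genuine obstacle to be the careful bookkeeping of the indexing convention in the singular value inequality; once that estimate is secured, the norm bound is a routine consequence of the homogeneity and monotonicity of the sequence norm, and no further analysis is needed.
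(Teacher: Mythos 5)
Your proposal is correct, and for part (2) it is essentially the paper's own argument: the paper likewise rests everything on the singular-value domination $s_{n}(xy)\leq s_{n}(x)\|y\|_{\infty}$ (inequality 2.3 of \cite{GK}), applied first to right and then to left multiplication, and then feeds the dominated sequence into the modular, using Lemma 3.1(2) to get $\sum_{n}\varphi\bigl(s_{n}(|xy|)/(\|x\|_{\varphi}\|y\|_{\infty})\bigr)\leq 1$ and hence the norm bound. You differ in two inessential but tidy ways: you derive the domination from the approximation-number characterization $s_{n}(x)=\inf\{\|x-F\|_{\infty}:\mathrm{rank}\,F<n\}$ rather than citing it, and you treat $yxz$ in a single two-sided step, whereas the paper concatenates the one-sided estimates and verifies membership $xy\in S_{\varphi}$ separately with the rescaled parameter $\lambda_{1}=\lambda_{0}/\|y\|_{\infty}$ (a check your finite-norm argument subsumes, since finiteness of the Luxemburg norm of a compact operator is equivalent to membership). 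The genuine divergence is in part (1): the paper proves $\|x\|_{\varphi}=\|x^{\ast}\|_{\varphi}$ via the polar decomposition $x=u|x|$, an induction yielding $u\,\varphi(|x/\lambda|)\,u^{\ast}=\varphi(|x^{\ast}/\lambda|)$ for continuous $\varphi$, and invariance of the trace under this conjugation — a route that requires some care because $u$ is only a partial isometry (it works since $\varphi(0)=0$ forces the support of $\varphi(|x/\lambda|)$ into the initial space of $u$). You instead use the spectral identity $s_{n}(x^{\ast})=s_{n}(x)$, coming from the equality of the nonzero spectra of $x^{\ast}x$ and $xx^{\ast}$ — which is exactly equality 2.1 of \cite{GK}, a fact the paper itself invokes later in the same proof for the $*$-ideal property. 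Your version is shorter, avoids functional calculus and the trace entirely, and has the conceptual merit of reducing the whole theorem uniformly to the monotonicity and homogeneity of the classical Orlicz sequence norm applied to $s(x;n)$; the paper's version, by contrast, illustrates the operator-theoretic mechanism ($u\varphi(|x|)u^{\ast}=\varphi(|x^{\ast}|)$) that underlies the equality of singular values.
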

\begin{proof}
(1) By the definition of the $\|\cdot\|_{\varphi}$ we can get $\|x\|_{\varphi}=\||x|\|_{\varphi}.$
Let $x=u|x|$ be the polar decomposition of $x$. Then for any $\lambda>0$ we have $u|\frac{x}{\lambda}|u^{\ast}=|\frac{x^{\ast}}{\lambda}|$, by the induction we get $u|\frac{x}{\lambda}|^{n}u^{\ast}=|\frac{x^{\ast}}{\lambda}|^{n}$, hence for any polynomial $P$ we have
$uP\left(|\frac{x}{\lambda}|\right)u^{\ast}=P\left(|\frac{x^{\ast}}{\lambda}|\right)$. Then for any continuous function $\varphi$, one get $u\varphi\left(|\frac{x}{\lambda}|\right)u^{\ast}=\varphi\left(|\frac{x^{\ast}}{\lambda}|\right)$.
By the property of ${\rm Tr}$ we get $${\rm Tr}\left(u\varphi\left(\left|\frac{x}{\lambda}\right|\right)u^{\ast}\right)={\rm Tr}\left(\varphi\left(\left|\frac{x}{\lambda}\right|\right)\right)={\rm Tr}\left(\varphi\left(\left|\frac{x^{\ast}}{\lambda}\right|\right)\right)$$
which can get the conclusion.

$(2)$ If $x\in S_{\varphi}$, then there exists a $\lambda_{0}>0$ such that
$${\rm Tr}(\varphi(\lambda_{0}x))=\sum\limits_{n\geq0}\varphi\left(\lambda_{0}s_{n}(|x|)\right)<\infty.$$
By the inequality 2.3 of \cite{GK} and the nondecreasing property of $\varphi$, for any $y \in \mathcal{B}(\mathbb{H})$ and let $\lambda_{1}=\frac{\lambda_{0}}{\|y\|_{\infty}}>0$,
\begin{eqnarray*}
{\rm Tr}(\varphi(\lambda_{1}xy))&=&\sum\limits_{n\geq0}\varphi\left(\lambda_{1}s_{n}(|xy|)\right)\\
&\leq&\sum\limits_{n\geq0}\varphi\left(\lambda_{1}s_{n}(|x|)\|y\|_{\infty}\right)\\
&=&\sum\limits_{n\geq0}\varphi\left(\lambda_{0}s_{n}(|x|)\right)\\
&<&\infty.
\end{eqnarray*}
On the other hand, combine  the inequality 2.3 of \cite{GK},  the nondecreasing property of $\varphi$ with the (2) of Lemma 3.1, we have
$$\sum\limits_{n\geq0}\varphi\left(\frac{s_{n}(|xy|)}{\|x\|_{\varphi}\|y\|_{\infty}}\right)\leq\sum\limits_{n\geq0}\varphi\left(\frac{s_{n}(|x|)\|y\|_{\infty}}{\|x\|_{\varphi}\|y\|_{\infty}}\right)\leq1,$$
by the definition of the Luxemburg we get $$\|xy\|_{\varphi}\leq\|x\|_{\varphi}\|y\|_{\infty}.$$
Hence, one get that $S_{\varphi}$ is a right ideal of $\mathcal{B}(\mathbb{H})$, furthermore using the equality 2.1 of \cite{GK} $(s_{n}(x)=s_{n}(x^{*}))$ one get $S_{\varphi}$ is a $*$-right ideal of $\mathcal{B}(\mathbb{H})$ and
\begin{equation}
\|xy\|_{\varphi}\leq\|x\|_{\varphi}\|y\|_{\infty}.
\end{equation}
Next, by the equality 2.1 of \cite{GK} and using the similar methods above we get $S_{\varphi}$ is a $*$-left ideal of $\mathcal{B}(\mathbb{H})$ and \begin{equation}
\|yx\|_{\varphi}\leq\|y\|_{\infty}\|x\|_{\varphi}.
\end{equation}
At last, for any $y,z\in \mathcal{B}(\mathbb{H})$, since $yx\in S_{\varphi}$ and $xz\in S_{\varphi}$  hence $yxz\in S_{\varphi}$ with $$\|yxz\|_{\varphi}\leq \|y\|_{\infty}\|x\|_{\varphi}\|z\|_{\infty}$$
using (1) and (2) one can complete the proof.
\end{proof}

\begin{theorem}
 $(E_{\varphi})^{*}=S^{o}_{\psi}$ in the sense that for any $(E_{\varphi})^{*}$, there is a unique $y\in S^{o}_{\psi}$ such that
 $$T_{y}(x)={\rm Tr}(xy)  \ \ (x\in (E_{\varphi})^{*})$$
 and the mapping $T_{y}\mapsto y$ is isometric from $(E_{\varphi})^{*}$ to
 $S^{o}_{\psi}$.
\end{theorem}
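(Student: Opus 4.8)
The plan is to follow the classical template for the K\"othe duality $(h_{\varphi})^{*}=l_{\psi}$ between the ``small'' Orlicz sequence space (the closure of the finitely supported sequences, carrying the Luxemburg norm) and the Orlicz sequence space carrying the Orlicz (Amemiya) norm, and to transport it to the operator setting through singular values and the trace pairing $x\mapsto {\rm Tr}(xy)$. Concretely, I would split the statement into three assertions: the map $y\mapsto T_{y}$ sends $S^{o}_{\psi}$ contractively into $(E_{\varphi})^{*}$; it is injective; and it is onto and norm-preserving.

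First I would establish the contractive embedding. For $y\in S_{\psi}$ and $x\in E_{\varphi}$, interchanging the roles of $\varphi$ and $\psi$ in the H\"older inequality of Lemma 3.1(4) (using ${\rm Tr}(|xy|)={\rm Tr}(|yx|)$ since $s_{n}(xy)=s_{n}(yx)$) gives ${\rm Tr}(|xy|)\leq \|x\|_{\varphi}\|y\|^{o}_{\psi}$, whence $|T_{y}(x)|\leq \|x\|_{\varphi}\|y\|^{o}_{\psi}$ and $T_{y}\in (E_{\varphi})^{*}$ with $\|T_{y}\|\leq \|y\|^{o}_{\psi}$. Injectivity is then immediate: the finite-rank operators $e_{m}\otimes e_{n}$ lie in $E_{\varphi}$, and a direct computation gives $T_{y}(e_{m}\otimes e_{n})={\rm Tr}((e_{m}\otimes e_{n})y)=\langle e_{n}, y e_{m}\rangle$, so $T_{y}=0$ forces every matrix entry of $y$ to vanish, i.e. $y=0$.

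The core of the theorem is surjectivity together with the reverse norm inequality. Given $T\in (E_{\varphi})^{*}$ and a fixed orthonormal basis $\{e_{n}\}$, I would define a candidate operator $y$ by prescribing its matrix entries $\langle e_{n}, y e_{m}\rangle := T(e_{m}\otimes e_{n})$, so that by linearity $T$ and $T_{y}$ agree on every finite-rank operator. The decisive estimate comes from compression: writing $P_{N}$ for the projection onto ${\rm span}\{e_{1},\dots,e_{N}\}$ and $y_{N}=P_{N}yP_{N}$, one uses that in finite dimensions the Orlicz norm is exactly the norm dual to the Luxemburg norm (which reduces, via the von Neumann trace inequality already exploited in the paper, to the classical sequence-space duality on singular values), so that $\|y_{N}\|^{o}_{\psi}=\sup\{|{\rm Tr}(x y_{N})| : x=P_{N}xP_{N},\ \|x\|_{\varphi}\leq1\}$. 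For such finite-rank $x$ one has ${\rm Tr}(x y_{N})={\rm Tr}(xy)=T(x)$, hence $\|y_{N}\|^{o}_{\psi}\leq\|T\|$ for every $N$. Letting $N\to\infty$ and invoking the monotonicity (Fatou property) of the modular ${\rm Tr}(\psi(\cdot))$ and the normality of the trace, I would conclude that $y$ is compact, that $y\in S_{\psi}$, and that $\|y\|^{o}_{\psi}\leq\|T\|$; combined with the first step this gives $\|T_{y}\|=\|y\|^{o}_{\psi}$, so the correspondence is isometric. Finally, to upgrade the equality $T=T_{y}$ from finite-rank operators to all of $E_{\varphi}$ I would use that the finite-rank operators are dense in $E_{\varphi}$ in the Luxemburg norm — precisely the order-continuity feature that distinguishes $E_{\varphi}$ from $S_{\varphi}$ and the reason the duality is stated for $E_{\varphi}$ — so that two bounded functionals agreeing on a dense subspace coincide.

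The step I expect to be the main obstacle is passing from the uniform bound $\|y_{N}\|^{o}_{\psi}\leq\|T\|$ on the compressions to the conclusion $y\in S^{o}_{\psi}$. One must verify that the formally defined array $(\langle e_{n}, y e_{m}\rangle)$ is genuinely the matrix of a compact operator and control its off-diagonal part (the diagonal alone being governed directly by the classical $(h_{\varphi})^{*}=l_{\psi}$ duality), and one must justify the limit $y_{N}\to y$ using the Fatou/normality property of the trace together with the fact that $\psi$ is an Orlicz function. Handling the off-diagonal entries and the interchange of supremum and limit is where the noncommutativity genuinely enters and where the argument requires the most care.
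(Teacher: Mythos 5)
Your proposal is correct in outline, but it handles the two delicate points by a genuinely different --- and in fact more careful --- route than the paper. The easy direction is the same in both: H\"older (Lemma 3.1(4)) gives $\|T_{y}\|\le\|y\|^{o}_{\psi}$, and both arguments lean on the density of finite-rank operators in $E_{\varphi}$ to pass from rank-one data to all of $E_{\varphi}$. The divergence is in the reverse inequality and in showing that the representing operator lands in $S_{\psi}$. The paper tests $T_{y}$ against a single rank-one operator $e\otimes h$, choosing a unit vector $e$ with $\|ye\|^{o}_{\psi}\ge\|y\|^{o}_{\psi}-\varepsilon$ and asserting $\|e\otimes h\|_{\varphi}=1$, and it obtains $y$ from the Riesz representation of the sesquilinear form $L(e\otimes h)$, asserting without argument that this bounded $y$ lies in $S_{\psi}$. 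Your compression scheme --- $y_{N}=P_{N}yP_{N}$, the exact finite-dimensional Luxemburg--Orlicz duality giving $\|y_{N}\|^{o}_{\psi}\le\|T\|$, then a Fatou/lower-semicontinuity argument to let $N\to\infty$ --- replaces both steps at once; it is the standard Gohberg--Krein-style argument for symmetric ideals. What it buys is rigor exactly where the paper is thin: rank-one testing can only detect the operator norm of $y$, not $\|y\|^{o}_{\psi}$, and by the paper's own computation $\|e\otimes h\|_{\varphi}=1/\varphi^{-1}(1)$, which is not $1$ in general, so the paper's lower bound $\|T_{y}\|\ge\|y\|^{o}_{\psi}$ does not go through as written; your uniform bound on compressions simultaneously yields the isometry and supplies the missing proof that the Riesz operator belongs to $S_{\psi}$. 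What the paper's route would buy, were its norm computations repaired, is brevity: no finite-dimensional duality or limiting argument is needed.

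One local slip to repair: you justify ${\rm Tr}(|xy|)={\rm Tr}(|yx|)$ by the claim $s_{n}(xy)=s_{n}(yx)$, which is false in general --- the nonzero \emph{eigenvalues} of $xy$ and $yx$ coincide, but the singular values need not (a $2\times2$ nilpotent example gives $|xy|\neq0=|yx|$). This does not damage your argument: all you need is $|T_{y}(x)|=|{\rm Tr}(xy)|=|{\rm Tr}(yx)|\le{\rm Tr}(|yx|)\le\|y\|^{o}_{\psi}\|x\|_{\varphi}$, using only cyclicity of the trace and $|{\rm Tr}(z)|\le{\rm Tr}(|z|)$ for trace-class $z$; delete the singular-value claim and the estimate stands. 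Finally, for the step you flag as the main obstacle, the standard way to finish is the one you sketch: Ky Fan norms are lower semicontinuous under weak operator convergence of the compressions $y_{N}\to y$, so $\sum_{n\le k}s_{n}(y)\le\liminf_{N}\sum_{n\le k}s_{n}(y_{N})$ for each $k$, and Hardy--Littlewood--P\'olya majorization together with the convexity of $\psi$ transfers the uniform modular bound from the $y_{N}$ to $y$; this yields $y\in S_{\psi}$, compactness included, and $\|y\|^{o}_{\psi}\le\|T\|$. With these details inserted your proof is complete, and uniqueness follows from your injectivity step as stated.
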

\begin{proof}
By [4] of Lemma 3.1, one get that $|T_{y}(x)|=|{\rm Tr}(xy)|\leq \|y\|^{o}_{\psi}\|x\|_{\varphi}$ and $\|T_{y}\|\leq \|y\|_{\psi}^{o}$.

Now, we define $\Phi: S^{o}_{\psi}\rightarrow (E_{\varphi})^{*}$ with $\Phi(y)=T_{y}$, then  $\Phi$ is a linear bounded operator. For any $\varepsilon>0$, find a unit vector $e\in \mathbb{H}$ such that $\|ye\|_{\psi}^{o}\geq\|y\|_{\psi}^{o}-\varepsilon$. Find another unit vector $h\in \mathbb{H}$ such that $\|ye\|_{\psi}^{o}=\langle ye,h\rangle={\rm Tr}(y(e\otimes h))$, hence $$T_{y}(e\otimes h)={\rm Tr}(y(e\otimes h))=\langle ye,h\rangle=\|y\|_{\psi}^{o}-\varepsilon.$$
 It is obvious $\|e\otimes h\|_{\varphi}=\|e\|\|f\|=1$. Hence, one get $\|T_{y}\|\geq\|y\|_{\psi}^{o}$ which implies that $\|T_{y}\|=\|y\|_{\psi}^{o}$. So, the mapping $\Phi :S^{o}_{\psi}\mapsto(E_{\varphi})^{*} $ is isometric.

Now, let $L\in (E_{\varphi})^{*}$ and easy to verify $L(e\otimes h)$ is a bounded bilinear functional. Because
$$|L(e\otimes h)|\leq \|L\|\|e\otimes h\|=\|L\|\|e\|\|h\|.$$
By Riesz theorem, there exist a $y\in S_{\psi}$  such that $L(e\otimes h)=\langle ye,h\rangle={\rm Tr}(y(e\otimes h))$. Hence for any finite rank operator $f$, we have $L(f)={\rm Tr}(yf)$. Since finite rank operator is dense on $E_{\varphi}$, then $L(x)={\rm Tr}(xy)=T_{y}(x)$ for any $x\in E_{\varphi}$, which means $L=T_{y}$ and $L: S^{o}_{\psi}\mapsto(E_{\varphi})^{*}$ is an isometric isomorphism which can get the conclusion.
\end{proof}
\begin{remark}
Using the similar method we also can get $S_{\varphi}=(E^{o}_{\psi})^{*}$.
\end{remark}
Combing the theorem 2.1, 3.4 and remark 1, one can get the following corollary,
\begin{corollary}
If $\varphi\in \delta_{2}(0)$ and $\psi\in \delta_{2}(0)$, $S_{\varphi}$ is reflexive. Especially, the $S_{p}$ is reflexive for $p>1$.
\end{corollary}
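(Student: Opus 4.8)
The plan is to derive reflexivity from the two duality identifications already established, the only real work being to check that their composition is exactly the canonical evaluation embedding of $S_{\varphi}$ into its bidual.

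First, since $\varphi\in\delta_{2}(0)$, Theorem 2.1 gives $S_{\varphi}=E_{\varphi}$; hence by Theorem 3.4,
$$S_{\varphi}^{*}=(E_{\varphi})^{*}=S^{o}_{\psi},$$
the identification being $y\mapsto T_{y}$ with $T_{y}(x)={\rm Tr}(xy)$, and it is isometric. Next I would dualize once more using the Remark following Theorem 3.4, namely $S_{\varphi}=(E^{o}_{\psi})^{*}$. Because $\psi\in\delta_{2}(0)$, Theorem 2.1 applied to $\psi$ gives $S_{\psi}=E_{\psi}$, and consequently $S^{o}_{\psi}=E^{o}_{\psi}$ (the same underlying space carrying the same Orlicz norm). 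Therefore
$$(S^{o}_{\psi})^{*}=(E^{o}_{\psi})^{*}=S_{\varphi},$$
again via the trace pairing and isometrically. Combining the two steps yields $S_{\varphi}^{**}=(S^{o}_{\psi})^{*}=S_{\varphi}$ as an isometric identification.

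The point requiring care — and the main obstacle — is that an abstract isometry $S_{\varphi}^{**}\cong S_{\varphi}$ does not by itself establish reflexivity: one must verify that the composed identification coincides with the canonical embedding $J\colon S_{\varphi}\to S_{\varphi}^{**}$, $J(x)(f)=f(x)$. This is where cyclicity of the trace enters. For $x\in S_{\varphi}$ and a functional $T_{y}\in S_{\varphi}^{*}=S^{o}_{\psi}$,
$$J(x)(T_{y})=T_{y}(x)={\rm Tr}(xy)={\rm Tr}(yx),$$
which is precisely the value assigned to $x$ under the second identification $(S^{o}_{\psi})^{*}=S_{\varphi}$. Hence $J$ is surjective, and since both identifications are isometric, $J$ is an isometric isomorphism; thus $S_{\varphi}$ is reflexive.

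Finally, for the special case I would take $\varphi(t)=t^{p}$ with $p>1$, so that $S_{\varphi}=S_{p}$; its complementary function is $\psi(t)=c\,t^{q}$ with $\tfrac{1}{p}+\tfrac{1}{q}=1$ and $1<q<\infty$. Both power functions clearly satisfy the $\delta_{2}(0)$ condition (indeed the global $\delta_{2}$ condition), so the general statement applies and $S_{p}$ is reflexive for $p>1$.
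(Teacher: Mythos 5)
Your proposal is correct and follows exactly the paper's route: the paper proves this corollary by simply combining Theorem 2.1 ($S_{\varphi}=E_{\varphi}$ under $\delta_{2}(0)$), Theorem 3.4 ($(E_{\varphi})^{*}=S^{o}_{\psi}$), and the remark $S_{\varphi}=(E^{o}_{\psi})^{*}$, which is precisely your chain $S_{\varphi}^{**}=(S^{o}_{\psi})^{*}=S_{\varphi}$. Your additional verification that the composed isometry agrees with the canonical embedding $J$ is a point the paper leaves implicit, and including it is a genuine improvement in rigor rather than a different approach.
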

\section{Application}
The Bergman space $L_{a}^{2}(\mathbb{D})$ on open unit disk $\mathbb{D}=\{z\in \mathbb{C}: |z|<1\}$ as following
$$L_{a}^{2}(\mathbb{D})=\left\{f(z)\ is\ analytic \  functions \  on\ \mathbb{D}: \|f\|^{2}=\frac{1}{\pi}\int_{\mathbb{D}}|f(z)|^{2}{\rm d}A(z)<\infty\right\},$$
where ${\rm d}A(z)$ is area metric. Then $L_{a}^{2}(\mathbb{D})$ is a Hilbert space which has a orthonormal basis $e_{n}(z)=\sqrt{n+1}z^{n},  n=0,1,2,\cdots.$ From the Example 5.17 of \cite{guo} we know, the Toeplitz operator $x_{1-|z|^{2}}$ on Bergman space is a compact self-adjoint operator and $x_{1-|z|^{2}}=\bigoplus\limits^{\infty}_{n=0}\frac{1}{n+2}e_{n}\otimes e_{n}$ is the spectral decomposition.  Now we show $x_{1-|z|^{2}}$ belongs to some $S_{\varphi}$ .

In fact, one could find some Orlicz functions $\varphi$ with $\sum\limits_{n\geq 0}\varphi\left(\frac{1}{n+1}\right)<\infty$, then $$\varphi\left(x_{1-|z|^{2}}\right)=\bigoplus\limits^{\infty}_{n=0}\varphi\left(\frac{1}{n+2}\right)e_{n}\otimes e_{n}$$ and $${\rm Tr}\left(\varphi\left(x_{1-|z|^{2}}\right)\right)=\sum\limits_{n\geq 0}\varphi\left(\frac{1}{n+1}\right)<\infty.$$
Hence, $x_{1-|z|^{2}}\in S_{\varphi}$. It is obvious, $x_{1-|z|^{2}}$ not belongs to $S_{1}$ but $S_{p}, \ p>1$.

Generally speaking, it is difficulty to calculate the norm of the $\|x_{1-|z|^{2}}\|_{\varphi}$ since the Orlicz function is abstract. But, if $\varphi\in \delta_{2}(0)$, by [1] of theorem 3.1, the norm of $x_{1-|z|^{2}}$ such that $$1=\sum\limits_{n\geq0}\varphi\left(\frac{1}{(n+2)\|x_{1-|z|^{2}}\|_{\varphi}}\right).$$ Especially, if $\varphi=|x|^{p}, \ p>1$, one can get $$\|x_{1-|z|^{2}}\|_{p}=\left[\sum\limits_{n\geq0}\frac{1}{\left(n+2\right)^{p}}\right]^{\frac{1}{p}}=\left(\zeta(p)-1\right)^{\frac{1}{p}},$$
where $\zeta(p)$ is the Riemann function.

\section*{References}

\bibliography{mybibfile}

\end{document}